\newtheorem{theorem}{Theorem}[section]
\newtheorem{lemma}[theorem]{Lemma}
\newtheorem{proposition}[theorem]{Proposition}
\theoremstyle{remark}
\newtheorem{remark}[theorem]{Remark}
\newtheorem{example}[theorem]{Example}
\numberwithin{equation}{theorem}
\newcommand{\injrad}{\operatorname{inj.rad}}
\newcommand{\conjrad}{\operatorname{conj.rad}}
\newcommand{\Ric}{\operatorname{Ric}}
\newcommand{\diam}{\operatorname{diam}}
\newcommand{\sndf}{{I\!I}}
\begin{document}
	\title[Stability of pure Nilpotent Structures] {Stability of pure Nilpotent Structures on collapsed Manifolds}
	
	
	\author{Zuohai Jiang}
	\address{School of Mathematical Sciences, Capital Normal Universiy, Beijing China}
	\email{jiangzuohai08@163.com}
	
	
	\author{Shicheng Xu}
	\address{School of Mathematical Sciences, Capital Normal Universiy, Beijing China}
	\curraddr{}
	\email{shichengxu@gmail.com}

	\subjclass[2010]{53C23, 53C21, 53C20, 5324}
	
	\date{\today}

	\begin{abstract} The goal of this paper is to study the stability of pure nilpotent structures on a manifold associated to different collapsed metrics. We prove that if two metrics on a $n$-manifold of bounded sectional curvature are $L_0$-bi-Lipchitz equivalent and sufficient collapsed (depending on $L_0$ and $n$), then up to a diffeomorphism, the underlying nilpotent Killing structures coincide with each other or one is embedded into another as a subsheaf. It improves Cheeger-Fukaya-Gromov's locally compatibility of pure nilpotent Killing structures for one collapsed metric of bounded sectional curvature to two Lipschitz equivalent metrics. As an application, we prove that those pure nilpotent Killing structures constructed by various smoothing method to a Lipschitz equivalent metric of bounded sectional curvature are uniquely determined by the original metric modulo a diffeomorphism.
	\end{abstract}
	
	\maketitle
	
	\setcounter{section}{-1}
	\section{Introduction}
	\subsection{Background}
	Let $(M^n,g)$ be a complete Riemannian $n$-manifold whose sectional curvature $|\sec(M,g)|\le 1$. It is called \emph{$\epsilon$-collapsed}, if for any $x\in M$ the injectivity radius satisfy $\injrad_g(x)\le \epsilon$. 
	Collapsed manifolds under bounded sectional curvature are extensively studied by Gromov, Cheeger-Gromov and Fukaya (\cite{Gromov1978}, \cite{Fukaya1987Collapsing,Fukaya1989collapsing},  \cite{CheegerGromov1990,CheegerGromov1990II}, \cite{CFG1992}). Since then many applications on manifolds of bounded sectional curvature were obtained (e.g., \cite{FangRong1999}, \cite{PRT1999}, \cite{PT1999}, etc. and survey papers \cite{Fu2006}, \cite{Rong2007}).

A \emph{maximally} collapsed manifold $(M^n,g)$, whose diameter and sectional curvature satisfy $\diam(M,g)\cdot |\sec(M,g)|^{1/2}< \epsilon(n)$, a constant depending on $n$, is characterized by Gromov's almost flat manifolds (\cite{Gromov1978,Ruh1982}), such that $M$ is a \emph{infra-nilmanifold} $\Gamma\backslash N$, where $N$ is a nilpotent Lie group and $\Gamma$ is a discrete affine transformation subgroup with a universal bounded index $[\Gamma:\Gamma\cap N]<w(n)$, a constant depending only on $n$. By a parametrized version of Gromov's almost-flat manifold, which is called Fukaya's fibration theorem \cite{Fukaya1987Collapsing,Fukaya1989collapsing}, the collapsing of a Riemannian manifold $(M,g)$ to a lower-dimensional manifold $(Y,h)$ corresponds to an affine fiber bundle $(M,Y,f)$ whose fiber is an almost flat manifold containing all collapsed directions.
A fiber bundle $(M,Y,f)$ with fiber a infra-nilmanifold $\Gamma\backslash N$, is called \emph{affine} if its structure group is contained in the affine transformation group of $\Gamma\backslash N$.

 In general an $\epsilon$-collapsed manifold $(M,g)$ are characterized by 
a \emph{nilpotent Killing structure} $\mathfrak n$, i.e., a sheaf of nilpotent Lie algebras of local vector fields pointing all $\epsilon$-collapsed directions, which are Killing fields with respect to a nearby metric $g_\epsilon$, and generate an action of a nilpotent Lie group on a normal cover of some neighborhood around points in $M$ (see \cite{CFG1992} or section 2.2).  A collapsed manifold $M$ decomposes along $\mathfrak n$ into ``orbits'', which are infra-nilmanifolds maybe of different dimension, tangent to the stalks of $\mathfrak n$ and absorbing all $\epsilon$-collapsed directions. The \emph{rank} of $\mathfrak n$ is defined to be minimal dimension of its orbits. A nilpotent Killing structure $\mathfrak n$ is called \emph{pure}, if the dimension of its stalk is locally constant.  For a pure structure $\mathfrak n$, we define its \emph{dimension} to be that of its stalk. Since collapsing can take place simultaneously on several length scales,  nilpotent Killing structures on a fixed $\epsilon$-collapsed metric also depend on the choice of $\epsilon$, the scale that one inspects. An $F$-structure constructed in \cite{CheegerGromov1990II} is an local action of a sheaf of tori (complete on a finite norm cover) which corresponds to the smallest length scale of collapsing, i.e., the injectivity radius at each point.

The above theorems fails for manifolds of bounded Ricci curvatrure; see \cite{Anderson1992}. However, it is well known that provided some additional conditions, such as a positive lower bound on conjugate radius, the manifolds of (lower) bounded Ricci curvature 
can be smoothed via various methods (e.g. \cite{DWY1996,PWY1999,Li2010}, etc.) to a nearby metric $g_\epsilon$ of bounded sectional curvature. If $(M,g)$ is \emph{$\epsilon$-volume collapsed}, i.e., the volume of every $1$-ball in $(M,g)$ is $\le \epsilon$, then the injectivity radius of the new metric $(M,g_\epsilon)$ could be also small (e.g. \cite{DWY1996}). Therefore a nilpotent Killing structure still exists on such manifolds. Via Ricci flow method (\cite{Hamilton1982}), nilpotent Killing structures are also known to exist on closed manifolds of lower bounded Ricci curvature, whose universal cover satisfies the $(\delta,r)$-Reifenberg condition (see \cite{HKRX}). In particular, Gromov's almost flat manifold theorem and Fukaya's fibration theorem holds \cite{DWY1996,PWY1999,HKRX}. It is a natural question that, besides the original metric, whether those nilpotent Killing structures substantially depend on those different smoothing methods? (see Remark \ref{rem-smooth} below)

The compatibility (resp. stability) of locally constructed \emph{pure} nilpotent Killing structures (resp. $F$-structures) around neighboring points for \emph{one fixed metric}, such that one fits inside another, is one of the key steps in construction of a global nilpotent Killing structure (resp. $F$-structure); see \cite{CFG1992} and \cite{CheegerGromov1990II}. The stability of pure nilpotent Killing structures associated to a continuous family of metrics $g(t)$ (sufficiently collapsed to a fixed limit) played an important role in solving a continuous version of  Klingenberg-Sakai conjecture (\cite{PRT1999}).

This paper is devoted to study the stability of pure nilpotent Killing structures for \emph{different collapsed metrics} on a fixed smooth manifold, and the uniqueness of nilpotent Killing structure obtained by various smoothing methods. 

\subsection{Main results}
Let $d_L$ be the \emph{Lipschitz distance} between two Riemannian metrics on $M$ defined by
$$d_L(g_1,g_2)=\inf_{\varphi:M\to M} \ln\left(\max \left\{\operatorname{dil}(\varphi), \operatorname{dil}(\varphi^{-1})\right\}\right),$$
where the infimum is taken over all diffeomorphisms $\varphi:M\to M$, and $\operatorname{dil}(\varphi)$ is the optimal Lipschitz constant of $\varphi$ (also called dilatation, cf. \cite{BBI2001}).
The main result of this paper is as follows.
\begin{theorem}\label{thm-lip-stable}
	Given $L_0>0, n\ge 1$, there is $\epsilon=\epsilon(L_0,n)>0$ such that the following holds.
	
	Let $g_i$ $(i=1,2)$ be two $\epsilon$-collapsed complete Riemannian metrics on a $n$-manifold $M^n$ whose sectional curvature $|\sec(M,g_i)|\le 1$. If the nilpotent Killing structure $\mathfrak n_i$ associated to $g_i$ is pure, and 
	\begin{equation}\label{cond-Lip-equiv}
	d_L(g_1,g_2)\le L_0,
	\end{equation}
	then there is a diffeomorphism $\Phi:M\to M$
	such that up to a permutation the push-forward $\Phi_*\mathfrak n_1$ is a subsheaf of $\mathfrak n_2$.  If the dimensions of $\mathfrak n_i$ are the same, then they are 
	isomorphic by $\Phi_*$ as sheaves.
\end{theorem}

Theorem \ref{thm-lip-stable} improves the local compatibility in \cite{CFG1992} and stability of pure nilpotent Killing structures in \cite{PRT1999} to a situation that two metrics $g_1$ and $g_2$ are not close in the Gromov-Hausdorff distance. 

Since there are infinitely many pairwise non-conjugate isometric $S^1$-actions on $S^2\times S^3$ (see Example \ref{exam-ty} below), the collapsing condition in Theorem \ref{thm-lip-stable} is essential. 

\begin{remark}\label{rem-main}
\item (\ref{rem-main}.1) 
Roughly speaking, the nilpotent structures are determined by the local fundamental group at the same length scale. Condition (\ref{cond-Lip-equiv}) is to guarantee the collapsing rates of $g_i$ are on a comparable level, such that the subgroups of $\pi_1(B_1(p))$ generated by $g_i$-small geodesic loops coincides on each collapsing length scale.

\item (\ref{rem-main}.2)
Our construction of $\Phi$ provides a slightly different way from \cite{CFG1992} (see Remark \ref{rem-isotopy}) to prove the local compatibility of pure nilpotent Killing structures on orthogonal fame bundles. By the method in this paper, we are able to construct a global nilpotent structure on an open manifold that only admits a local smoothing (e.g. \cite{Li2010}); see \cite{HKRX1}.
\end{remark}

As a corollary of Theorem \ref{thm-lip-stable}, we obtain the following uniqueness of pure nilpotent Killing structure by various smoothing techniques.
\begin{theorem}\label{thm-unique-nstr}
	Given any $n\ge 1, C>0$, there is a positive constant $\epsilon(n,C)>0$ such that for $0<\epsilon<\epsilon(n,C)$, pure nilpotent Killing structures on an $\epsilon$-volume collapsed $n$-manifold $(M^n,g)$  derived by different smoothing methods to a metric $g_0$ such that \begin{equation}\label{cond-lipnearby}
	d_L(g_0,g)\le 1 \text{ and } |\sec(M,g_0)|\le C
	\end{equation} are unique up to a diffeomorphism.
\end{theorem}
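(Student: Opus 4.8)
The plan is to deduce Theorem \ref{thm-unique-nstr} from Theorem \ref{thm-lip-stable} by checking that the hypotheses of the latter hold once we replace the (possibly only Ricci-bounded) metric $g$ by the two smoothed metrics we wish to compare. Suppose $g_\epsilon^{(1)}$ and $g_\epsilon^{(2)}$ are produced from $(M,g)$ by two smoothing procedures (embedding smoothing, Ricci flow, local mollification, etc.). Each smoothing method is designed so that the new metric is both Lipschitz-close to the metric it smooths and of controlled sectional curvature. First I would record, from the cited smoothing results (\cite{DWY1996,PWY1999,Li2010,HKRX}), that there are uniform constants so that
\begin{equation}\label{plan-smooth}
	d_L\bigl(g,g_\epsilon^{(j)}\bigr)\le \delta(\epsilon),\qquad |\sec(M,g_\epsilon^{(j)})|\le 1,\qquad j=1,2,
\end{equation}
where $\delta(\epsilon)\to 0$ as $\epsilon\to 0$; here I would normalize the curvature bound to $1$ by rescaling, using that $\epsilon$-volume collapse is preserved (with a rescaled $\epsilon$) under such a bounded rescaling.

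Next I would combine the two Lipschitz estimates in \eqref{plan-smooth} with the triangle inequality for $d_L$, together with hypothesis \eqref{cond-lipnearby} relating $g$ to the reference metric $g_0$. This yields a uniform bound
\begin{equation}\label{plan-triangle}
	d_L\bigl(g_\epsilon^{(1)},g_\epsilon^{(2)}\bigr)\le d_L\bigl(g_\epsilon^{(1)},g\bigr)+d_L\bigl(g,g_\epsilon^{(2)}\bigr)\le 2\delta(\epsilon)\le L_0,
\end{equation}
so the two smoothed metrics are $L_0$-bi-Lipschitz equivalent for a fixed $L_0$ (one may take $L_0=1$ for small $\epsilon$). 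The role of $g_0$ and condition \eqref{cond-lipnearby} is to guarantee that the volume-collapse of $g$ is inherited at a uniform sectional-curvature scale, so that each smoothing really does produce an $\epsilon'$-collapsed metric of bounded sectional curvature with $\epsilon'\to 0$; this is where I would invoke that $\epsilon$-volume collapse plus a two-sided sectional bound forces small injectivity radius (\cite{DWY1996}). Thus for $\epsilon<\epsilon(n,C)$ both $g_\epsilon^{(1)}$ and $g_\epsilon^{(2)}$ are $\epsilon'$-collapsed with $|\sec|\le 1$ and are $L_0$-Lipschitz equivalent.

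With these verifications in place, I would apply Theorem \ref{thm-lip-stable} directly to the pair $(g_\epsilon^{(1)},g_\epsilon^{(2)})$. Since the nilpotent Killing structures $\mathfrak n_j$ in question are assumed pure, Theorem \ref{thm-lip-stable} furnishes a diffeomorphism $\Phi:M\to M$ carrying one structure into the other as a subsheaf, and an isomorphism of sheaves when the dimensions coincide. To upgrade "subsheaf" to genuine uniqueness (isomorphism) I would argue that the two structures must have equal dimension: both capture the collapsed directions of the \emph{same} underlying collapsed geometry of $g$, so their ranks, being determined by the local collapse of $g$ at the common injectivity-radius scale, agree; hence by the last sentence of Theorem \ref{thm-lip-stable} the map $\Phi_*$ is a sheaf isomorphism, giving the asserted uniqueness modulo diffeomorphism.

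I expect the main obstacle to be the bookkeeping in the second paragraph: extracting from the disparate smoothing papers a single clean statement of the form \eqref{plan-smooth}, with a uniform Lipschitz control $\delta(\epsilon)$ and a uniform curvature normalization, and then confirming that each smoothed metric is genuinely $\epsilon'$-collapsed rather than merely volume-collapsed. The Lipschitz and curvature bounds are standard outputs of each method, but verifying that they hold \emph{simultaneously} with constants depending only on $(n,C)$—and checking that the dimension equality needed to pass from subsheaf to isomorphism indeed holds—are the points requiring care, since different smoothings a priori could collapse at slightly different scales. Once the collapsing scales are shown to be comparable (which is precisely the content ensured by the Lipschitz bound \eqref{plan-triangle} feeding into Theorem \ref{thm-lip-stable}), the conclusion follows.
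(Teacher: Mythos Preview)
Your approach is correct and matches the paper's, whose entire proof is the single line ``It directly follows from Theorem \ref{thm-lip-stable}.'' However, you have slightly misread the statement and thereby over-complicated the reduction: in Theorem \ref{thm-unique-nstr}, $g_0$ \emph{is} the smoothed metric, not a separate reference metric. Two smoothing methods produce two metrics $g_0^{(1)},g_0^{(2)}$, each already assumed to satisfy \eqref{cond-lipnearby}; the triangle inequality then gives $d_L(g_0^{(1)},g_0^{(2)})\le 2$ directly, and $\epsilon$-volume collapse together with the two-sided sectional bound $C$ forces $\epsilon'$-collapse in the injectivity-radius sense, so Theorem \ref{thm-lip-stable} applies with $L_0=2$ after rescaling. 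Your extraction of a quantity $\delta(\epsilon)\to 0$ from the smoothing literature and your separate invocation of $g_0$ as an auxiliary reference are not needed---the hypotheses you want are handed to you by \eqref{cond-lipnearby}.
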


Theorem \ref{thm-unique-nstr} covers the nilpotent Killing structures constructed via Ricci flow in \cite{DWY1996,HKRX} and related evolutions such as \cite{Li2010} on manifolds of bounded Ricci curvature, or via embedding into a Hilbert space in \cite{PWY1999} on manifolds of lower bounded Ricci curvature under some additional regularity assumptions (e.g., a positive bound on conjugate radius). 

By Theorem \ref{thm-unique-nstr} again, Theorem \ref{thm-lip-stable} holds for any $\epsilon$-volume collapsed manifold $(M,g)$ such that there is a nearby metric $g_0$ satisfying (\ref{cond-lipnearby}). For example, if sectional curvature bound in Theorem \ref{thm-lip-stable} is replaced by 
\begin{equation*}
\Ric(M,g_i)\ge -(n-1), \quad \conjrad(M,g_i)\ge r_0
\end{equation*}
and $(M,g_i)$ is $\epsilon_0(L_0,n,r_0)$-volume collapsed,
then the nilpotent Killing structure exists \cite{PWY1999} and uniquely determined by $g_i$, and the stability result in Theorem 0.1 still holds for such $g_i$.

In practice, condition (\ref{cond-Lip-equiv}) naturally arises in Theorem \ref{thm-unique-nstr}, due to that different methods usually give rise to different curvature bounds. 

Indeed, let $C_i(t)$ $(i=1,2; t\ge 0)$ be universal sectional curvature bounds for two methods $\mathcal{S}_{i,t}$ respectively, i.e., 
$C_i(t)=\sup_{g} |\sec(M,g_{i,t})|$, where the supremum is taken over all smoothable metric $g$, and $g_{i,t}=\mathcal{S}_{i,t}(g)$ $(i=1,2)$ are smoothed metrics satisfying $d_L(g_{i,t},g)\le t$. Then after normalizing to $|\sec|\le 1$, the rescaled metrics will admit a Lipschitz control \begin{equation}\label{rescaled-control}
d_{L}(C_1g_{1,t}, C_2g_{2,t})\le \max\{\ln C_1/C_2,\ln C_2/C_1\}+t, \;\text{where $C_i=C_i(t)$.}
\end{equation}

\begin{remark}\label{rem-smooth}
	We point out the main issues on stability of nilpotent structures constructed by smoothing methods.
	
	First, the general sectional curvature bound $C_i(t)$ of $g_{i,t}$ would blow up at a different rate, as $g_{i,t}$ approaches $g$. For an $\epsilon$-volumed collapsed metric $g$, if the ratio $C_1(t)/C_2(t)$ is large as $t$ small, then the renormalized metrics $C_i(t)g_{i,t}$ maybe collapse on different scales. 
	
	Secondly, as the underlying metric continuously varies to different scales, sub-nilpotent structures may appear or vanish several times (see Example \ref{exam-ty} below). A coherence between nilpotent Killing structures of $g_{1,t}$ and $g_{2,t'}$ ($t<t'$) does not imply the same between $g_{1,t'}$ and $g_{2,t'}$, as $C_i(t)/C_i(t')$ is relative large. 
	
	Due to the two issues above, a uniform curvature bound is required in Theorem \ref{thm-unique-nstr}.
	
	For the same reason, the uniqueness in Theorem \ref{thm-unique-nstr} cannot follow from  previous results in \cite{CFG1992} or \cite{PRT1999}, where both of them essentially deal with two $C^{1,\alpha}$-close metrics with uniformly bounded curvature.
\end{remark}

Here is a typical example that carries different nilpotent Killing structures  (cf. \cite{PT1999}, \cite{wang1990einstein}).
\begin{example}\label{exam-ty}
Let $S^2\times S^3$ be endowed which the canonical product metric $h_0$. For a positive integer $k$, let $\rho_k$ be the isometric free $S^1$-action on $S^2\times S^3$ by $$e^{\theta\sqrt{-1}}\cdot (x,z;v,w)=(x,e^{k\theta\sqrt{-1}}z;e^{\theta\sqrt{-1}}v,e^{\theta\sqrt{-1}}w),$$ where $x$ is real, $z,v,w$ are complex coordinates. By \cite[Proposition 5.1]{OU72}, $\rho_k$ gives rise to circle bundles of distinct Euler class for different $k$. Thus $\rho_j$ and $\rho_k$ ($j\neq k$) are pairwise non-conjugate actions. 
By shrinking fibers (see \cite{CheegerGromov1990}, cf. \cite{CaiRong2009}) there is a continuous family of Riemannian metrics $g_k(\epsilon)$ on $S^2\times S^3$ collapsing to quotient manifold $S^2\times S^3/\rho_k(S^1)$ with sectional curvature bound $|\sec(S^2\times S^3,g_k(\epsilon))|\le C$, where $C$ is a constant independent of $k$. 

Thus, the nilpotent Killing structures $\mathfrak n_k$ of $g_k(\epsilon)$ are non-isomorphic, and by joining to $h_0$, collapsed metric $g_{1,\epsilon}$ can be changed smoothly to any $g_{k,\epsilon}$.

By Theorem \ref{thm-lip-stable}, the pairwise Lipschitz distance $d_L(g_i(\epsilon),g_j(\epsilon))$ goes to $\infty$ for $i\neq j$ as $\epsilon\to 0$. 
\end{example}

In general,  let $\mathcal{R}(M;\epsilon,d)$ be the moduli space endowed with Lipschitz distance, which consists of all isometric classes of $\epsilon$-collapsed Riemannian metrics on a $n$-manifold $M^n$, whose sectional curvature $|\operatorname{sec}|\le 1$ and diameter $\le d$. Let $\mathcal{R}(M;\epsilon,d,k)$ be the subspace whose underlying nilpotent Killing structure has dimension equals to $k$. Then by Theorem \ref{thm-lip-stable}, for $0<\epsilon\le\epsilon(n,d)$,
each component of $\mathcal{R}(M;\epsilon,d,k)$ corresponds a unique isomorphism class of nilpotent Killing structures on $M$. Moreover, the Lipschitz distance between metrics in $\mathcal{R}(M;\epsilon,d,k)$ corresponding to distinct nilpotent Killing structures goes to infinity as $\epsilon\to 0$.


The remaining of the paper is organized as follows.
In section 1 we fix some notations, and recall some preliminary facts on submersions and nilpotent Killing structures.
Since the proof of Theorem \ref{thm-lip-stable} is quite long, we first give an outline in section 2. 
Section 3 to section 6 are devoted to the proof of Theorem \ref{thm-lip-stable}.

\textbf{Acknowledgment.} We owe gratitude to Xiaochun Rong for raising related problems, his constant support and encouragement. The second author is grateful to Fuquan Fang for several highly stimulating conversations. We would like to thank Xuchao Yao for some useful discussions. This work is supported partially by NSFC Grant 11401398 and by Youth Innovative Research Team of Capital Normal University.



\section{Notations and Preliminaries}
In this section we fix some notations, and recall some elementary facts used later.

\subsection{Submersions}
Let $f:(M,g)\to (Y,h)$ be a (not necessarily Riemannian) submersion between two manifolds.
The $f$-vertical distribution tangent to $f$-fibers and its orthogonal complement, the $f$-horizontal distribution, are denoted by $\mathcal{V}_f$ and $\mathcal{H}_f$ respectively. We use   
$\mathcal V_f(x)$ (resp. $\mathcal{H}_f(x)$) to denote the vertical (resp. horizontal) subspace at $x\in M$.

The second fundamental form $\sndf_f$ of $f$-fibers and the integrability tensor $A_f$ of $f$ are defined respectively by
\begin{align*}
&\sndf_f:\mathcal V_f(x)\times \mathcal V_f(x)\to \mathcal H_f(x), \quad \sndf_f(T,T)=(\nabla_TT)^\perp|_x,\\
&A_f:\mathcal H_f(x)\times \mathcal H_f(x)\to \mathcal V_f(x), \quad A(X,Y)=[X,Y]^\top|_x\in \mathcal V_f(x).
\end{align*}

If a submersion $f:(M,g)\to (Y,h)$ is proper, then $(M,Y,f)$ forms a locally trivial fiber bundle, whose local trivialization can be realized via $f$-horizontal lifting curves. Since our construction of a bundle isomorphism in section 5 relies on this, we recall the local trivialization and related estimates in below.

Let $g$ and $h$ be any fixed Riemannian metric tensor on $M$ and $Y$ respectively. Let $p\in Y$ be a fixed point and let $0<r<\operatorname{inj.rad}(p)$ in $(Y,h)$. For any point $x\in f^{-1}(B_r(p))$, there is a unique minimal geodesic $\gamma_x:[0,1]\to Y$ connecting $f(x)=\gamma_x(0)$ and $p=\gamma_x(1)$. Because $f$ is proper, the horizontal lifting $\tilde \gamma_x:[0,1]\to M$ at $x$ is uniquely well-defined by $\tilde \gamma_x(0)=x$, $f(\tilde \gamma_x(t))=\gamma_x(t)$, and tangent vector $\tilde\gamma_x'(t)$ lies in horizontal distribution $\mathcal H_f$. 
We define a map 
\setcounter{equation}{0}
\addtocounter{theorem}{1}
\begin{equation}\label{def-varphi}
\varphi: f^{-1}(B_r(p))\to B_r(p)\times F_p \quad \text{by}\quad \varphi(x)=(f(x),\tilde\gamma_x(1)),
\end{equation} 
where $F_p=f^{-1}(p)$ is a $f$-fiber over $p$.

By construction, $\operatorname{pr}_1\circ \varphi=f$, where $\operatorname{pr}_1$ is the projection to the 1st factor.
Since $\varphi$ can be viewed as a projection of a flow in $TM$  generated by tangent fields of $\tilde \gamma_x$ at time $1$, 
$\varphi:f^{-1}(B_r(p))\to B_r(p)\times F_p$ is a diffeomorphism. Thus the map $\varphi$ is a local trivialization of fiber bundle $(M,Y,f)$. 

Let $\varphi_2:f^{-1}(B_r(p))\to F_p$ be the 2nd factor of $\varphi$, then by definition 
\begin{equation}\label{def-varphi2}
\varphi(x)=(f(x),\varphi_2(x)), \quad \varphi_2(x)=\tilde\gamma_x(1).
\end{equation}

By standard variation methods (cf. Lemma 1 in \cite{LiXu18}), $\dif \varphi_2$ is under control by $\sndf_f$, $A_f$, Lipschitz and co-Lipschitz constant of $f$, and the sectional curvature bound of the base space $Y$, as follows.

Let $L_0>0$ be a co-Lipschitz constant of $f$, i.e., 
for any horizontal vector $\xi\in \mathcal H_f$,  $|\xi|_{g}\le L_0\cdot |\dif f(\xi)|_h$.
Then for any vertical vector $v\in \mathcal V_f(x)$, 
\setcounter{equation}{0}
\addtocounter{theorem}{1}
\begin{equation}\label{lem-second-fundamental-co-Lip}
e^{-L_0 |\sndf|r(x)} |v| \le |\dif \varphi_2 (v)|\le e^{L_0 |\sndf|r(x)} |v|,
\end{equation}
where $|\sndf|=\sup_{q\in Y} |\sndf_{F_q}|$, and $r(x)=d(f(x),p)\le r$ is the distance between points $f(x)$ and $p$.

If $L_1$ is a Lipschitz constant of $f$ and $|\sec(Y,h)|\le 1$, then any horizontal $w\in \mathcal H_f(x)$ and $0\le r(x)\le \min\{\frac{\pi}{2},r\}$,
\begin{equation}\label{lem-integrability}
|\dif \varphi_2(w)|\le CL_0L_1^2e^{L_0|\sndf|r(x)}|A|r(x)\cdot |w|,
\end{equation}
where $|A|=\sup_{q\in Y} |A_{F_q}|$ and $C$ is a universal constant.

Now let $f:(M,g)\to (Y,h)$ be an $\epsilon$-almost Riemannian submersion, i.e., , for any vector $\xi$ perpendicular to a $f$-fiber,
\setcounter{equation}{0}
\addtocounter{theorem}{1}
\begin{equation}\label{eq-almost-riemsub}
e^{-\epsilon}|\xi|_{g}\le |\dif f(\xi)|_h\leq e^{\epsilon}|\xi|_g.
\end{equation}
By definition and easy calculation, the norms of $\sndf_f$ and $A_f$ are pointwisely bounded by the second fundamental form $\nabla^2f=\nabla df$.  That is,
\addtocounter{theorem}{1}
\begin{equation}\label{eq-C2-bound}
|\sndf_{F_p}|\leq e^\epsilon\cdot  |(\nabla df)_{F_p}|, \qquad
|A_{F_p}|\le 2e^{3\epsilon}\cdot |(\nabla df)_{F_p} |,
\end{equation}
where $|\nabla df|=\max_{|X|=|Y|=1} |\nabla df (X,Y)|$.

If $M$ is complete and $Y$ is connected, then for any $p,q\in Y$,
\begin{equation}\label{almost-equidistance}
e^{-\epsilon}\cdot d(p,q) \leq d_H(f^{-1}(p),f^{-1}(q))\leq e^{\epsilon}\cdot d(p,q),
\end{equation}
where $d_H(A,B)$ is the Hausdorff distance between two subsets $A,B$ in a metric space, i.e., the infimum of $\epsilon>0$ such that the $\epsilon$-neighborhood of $A$ contains $B$ and vice versa.

Indeed, because $f(M)$ is open and close in $Y$, $f(M)=Y$. By definition, $f$ is $e^\epsilon$-Lipschitz, which implies
$d(p,q)\le e^\epsilon\cdot  d(f^{-1}(p),f^{-1}(q))$. Because the horizontal lifting curve of a  minimal geodesic $\gamma$ connecting $p$ and $q$ has length $\le e^\epsilon d(p,q)$, $d_H(f^{-1}(p),f^{-1}(q))\le e^\epsilon\cdot d(p,q)$.

The injectivity radius function $\injrad_{g}:M\to \mathbb R$ on a complete Riemannian manifold $(M,g)$ is known to be locally $1$-Lipschitz \cite{Xu17}, i.e., for any two points $p,q$ in $M$,
\setcounter{equation}{0}
\addtocounter{theorem}{1}
\begin{equation}\label{inj-lip}
\injrad_g(q)\ge\min\{\injrad_g(p),\conjrad_g(q)\}-d(p,q),
\end{equation}
where $\conjrad_g(q)$ is the conjugate radius at $q$, and $d(p,q)$ is the distance between $p$ and $q$.

Given real numbers $L\ge 1$ and $\epsilon\ge 0$, a (not necessarily continuous) map $\psi:X\to Y$ between metric spaces is called a
\emph{$(L, \epsilon)$-quasi-isometry} if for all $x_1$ and $x_2$ in $X$,
\setcounter{equation}{0}
\addtocounter{theorem}{1}
\begin{equation}\label{def-quasi-isometry-1}
L^{-1}d_X(x_1,x_2)-\epsilon\le d_Y(\psi(x_1),\psi(x_2))\le Ld_X(x_1,x_2)+\epsilon,
\end{equation}
and $\psi(X)$ is $\epsilon$-dense in $Y$.

We use $\varkappa(\epsilon|a,b,c,\dots)$ to denote a positive  function depending on $\epsilon,a,b,c,\dots$ such that after fixing $a,b,c,\dots$, $\varkappa(\epsilon|a,b,c,\dots)\to 0$ as $\epsilon\to 0$. It will be simply written as $\varkappa(\epsilon)$, if the dependence is clear.

\subsection{Nilpotent Killing structures}
The reference for this subsection is \cite{CFG1992}.

Let $\mathfrak n$ be a sheaf of Lie algebras generated by locally defined smooth vector fields on $M$.
A metric $g$ on $M$ is called \emph{$\mathfrak n$-invariant}, if all (local) sections of $\mathfrak n$ are Killing fields for $g$.

For a local section $X$ of $\mathfrak n$, its flow defines a local one-parameter action. A set $Z\subset M$ is called \emph{invariant} if $Z$ is preserved by all such actions. For any point $p\in M$, the \emph{orbit} $O_p$ of $p$ is defined to be the minimal invariant set containing $p$.

Let $\mathfrak n$ be a sheaf of nilpotent Lie algebras. It is called a \emph{nilpotent Killing structure} for $g$, if for any $p\in M$, there is an invariant neighborhood $U$ of $p$ and a normal covering $\pi:\tilde U\to U$ such that
\addtocounter{theorem}{1}
\begin{enumerate}
	\item The integral of the pullback sheaf $\pi^*\mathfrak n(\tilde U)$ generates an isometric action $\rho$ of a simply connected nilpotent Lie group $N_U$, whose kernel $K=\ker\rho$ is discrete.
	\item $N_U$ and the deck-transformation group $\Lambda$ on $\tilde U$ generates an isometric action of a Lie group $H$ of finite many components, extending that of $\Lambda$ such that the identity component $N_0=N_U/K$.
	\item For any open $\tilde W\subset \tilde U$ containing a preimage point of $p$, the structure homomorphism $\pi^*\mathfrak n(\tilde U)\to \pi^*\mathfrak n(\tilde W)$ is an isomorphism.
	\item The neighborhood $U$ and covering $\tilde U$ can be chosen independent of $p\in O_p$.
\end{enumerate}
The $\mathfrak n$-invariant metric $g$ is called $(\rho,k)$-round, if in addition, the neighborhood $U$ above can be chosen to satisfies the following properties.
\addtocounter{theorem}{1}
\begin{enumerate}
	\item $U$ contains a metric ball $B_\rho (p)$ and all points in $\tilde U$ away from boundary have injectivity radius $>\rho$.
	\item $\#H/N_0=\#\Lambda/(\Lambda\cap N_0)\le k$.
\end{enumerate}

To illustrate what happens, we give some elementary but typical examples.

Let $N$ be a simply connected nilpotent Lie group, $\Lambda$ be a co-compact discrete subgroup, and let $g$ be a left invariant metric on $N$. Let $\mathfrak n$ be the sheaf of right invariant vector fields, which are Killing fields for $g$.
\begin{example}[nilmanifolds]
	Let $X$ be a right invariant vector field on $N$. Then for any $a\in N$ and $\lambda\in \Lambda$,
	$\lambda \cdot \exp tX\cdot a = \exp t\operatorname{Ad}_\lambda X \cdot \lambda \cdot a$. The conjugate quotient $\mathfrak n$ by $\Lambda$ defines a canonical nilpotent Killing structure on the nilmanifold $\Lambda\backslash N$. The center of $\mathfrak n$ will descend to a subsheaf which generates a torus action on $\Lambda\backslash N$.
\end{example}

Let $\nabla^{\operatorname{can}}$ be the canonical flat connection  with parallel torsion on $N$. Then its affine transformation group is $N\rtimes \operatorname{Aut}(N)$.
A \emph{infra-nilmanifold} $Z$ is a compact quotient manifold $\Gamma\backslash N$, where $\Gamma$ is a discrete subgroup of $N\rtimes \operatorname{Aut}(N)$.  The connection $\nabla^{\operatorname{can}}$ descends to $Z$, which is called a canonical \emph{affine structure}.  The affine group is denoted by $\operatorname{Af\!f}(\Gamma\backslash N)$. Since the index $[\Gamma:\Gamma/\Gamma\cap N]<w(n)$, the canonical nilpotent Killing structure on $(\Gamma \cap N)\backslash N$ induces a canonical nilpotent Killing structure on $Z$.

\begin{example}[affine bundles]
	A fiber bundle $(X,Y,f)$ is called to be \emph{affine}, if
	its fiber $Z$ is diffeomorphic to a infra-nilmanifold  $\Lambda\backslash N$ and its structure group is contained in $\operatorname{Af\!f}(\Gamma\backslash N)$. The sheaf of parallel vector fields along fibers naturally form a nilpotent Killing structure $\mathfrak n$ on $X$ (cf. \cite[II.4]{CFG1992}). A metric $g$ on $X$ is called \emph{affine-invariant}, if it is $\mathfrak n$-invariant. 
	
	Let $G$ be a compact group acting on $X$ and $Y$ isometrically. If the bundle projection $f$ is $G$-equivariant, i.e.,
	$$f(g\cdot x)=g\cdot f(x), \quad\text{for}\; \forall\, g \in G, \; \forall\, x\in X,$$
	and at the same time, $G$ preserves the affine structure on every fiber, then the action of $G$ extends to an action on $\mathfrak n$, such that the actions of $\mathfrak n$ and $G$ on $X$ commute (see \cite{CFG1992} for details). If the action of $G$ is free, then the quotient sheaf $\bar{\mathfrak n}$ on $X/G$ is also a nilpotent Killing structure.
	
	A bundle map $\Phi$ between two affine bundles $(X_i,Y_i,f_i)$ $(i=1,2)$ is called affine-equivariant, if it preserves the affine structures, i.e., $\Phi_*\mathfrak n_1$ is a subsheaf of $\mathfrak n_2$.
\end{example}

The existence of a nilpotent Killing structure was proved in \cite{CFG1992} for collapsed manifolds with bounded sectional curvature.
\begin{theorem}[\cite{CFG1992}]\label{thm-nilpotent-structure}
	For any $\delta>0$ and integer $n>0$, there are $\rho,\epsilon>0$, and integer $k\ge 1$ such that the following holds.
	
	Let $(M,g)$ be an $\epsilon$-collapsed $n$-manifold of $|\sec(M,g)|\le 1$. Then $M$ carries a nilpotent Killing structure $\mathfrak n$ of positive rank for a nearby $\mathfrak n$-invariant metric $g_\delta$, which is $(\rho,k)$-round and satisfies
	\begin{enumerate}
		\item $g^{-\delta}g<g_\epsilon<e^{\delta}g$,
		\item $g_\delta$'s connection is $\varkappa(\delta)$-close to that of $g$,
		\item curvature operator $R_{g_\delta}$ and its $i$-th covariant derivatives is bounded by $c(n,i,\delta)$.
	\end{enumerate}    
\end{theorem}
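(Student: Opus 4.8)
The plan is to follow the frame-bundle strategy of Cheeger--Fukaya--Gromov. The essential difficulty is that $(M,g)$ itself has $\injrad_g\le \epsilon$, so no fibration can be produced directly on $M$. First I would pass to the orthonormal frame bundle $F(M)$, equipped with the canonical metric $g^F$ assembled from the Levi-Civita connection (the horizontal part) and a fixed bi-invariant metric on the $O(n)$-fibers. Since $|\sec(M,g)|\le 1$, the metric $g^F$ has two-sided curvature bounds depending only on $n$; moreover the $O(n)$-action is by isometries with orbits of definite size, and---crucially---the local isometry pseudo-group on $F(M)$ is \emph{regular}, so that the collapsing nilpotent local group can be arranged to act freely. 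This is the whole point of passing to $F(M)$: it resolves the infra-nilmanifold fibers downstairs into genuine nilmanifolds upstairs, and every object produced there can be made $O(n)$-equivariant, hence descends to $M$ at the end.

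Second, on balls in $F(M)$ of a fixed radius I would invoke the equivariant version of Fukaya's fibration theorem: the Gromov--Hausdorff closeness of such a ball to a lower-dimensional space yields an almost Riemannian submersion $f\colon B\to Y$ whose fibers are infra-nilmanifolds $\Gamma\backslash N$, by Gromov's almost flat manifold theorem applied to the fibers. The right-invariant vector fields of $N$ descend to each fiber and furnish, on every local piece, a nilpotent Lie algebra of local Killing fields for a suitable nearby metric; these are the candidate stalks of $\mathfrak n$. I would assemble them into a sheaf and verify the four defining conditions: the normal covering $\tilde U\to U$ is the one unwrapping the fiber to its nilmanifold cover $(\Gamma\cap N)\backslash N$, on which $N_U$ acts with discrete kernel, the finite group $H/N_0$ records the holonomy $\Gamma/(\Gamma\cap N)$ bounded by $w(n)$, and the $(\rho,k)$-round property follows by bounding $\injrad$ from below on these local coverings, where the nilpotent directions have been unwrapped. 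Positive rank is automatic, since the $\epsilon$-collapsing forces the fibers to have positive dimension.

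Third, to upgrade to an honest $\mathfrak n$-invariant metric and to obtain the regularity claims (1)--(3), I would smooth $g^F$ by an $O(n)$-invariant local smoothing (embedding or heat-kernel convolution), bounding every covariant derivative of the curvature operator $R$ by $c(n,i,\delta)$ while keeping the metric $\varkappa(\delta)$-close in $C^0$ and its connection $\varkappa(\delta)$-close to the original. I would then average the smoothed metric over the local nilpotent group actions so that the candidate fields become genuine Killing fields, producing $g_\delta$ with the stated two-sided pinching $e^{-\delta}g<g_\delta<e^{\delta}g$. The orbit structure tangent to the stalks of $\mathfrak n$ then absorbs all the $\epsilon$-collapsed directions.

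The main obstacle, and the technical heart of the argument, is the \emph{gluing}. The locally defined submersions $f_\alpha$ and their nilpotent algebras must be made mutually compatible on overlaps, so that the stalks patch into a single globally defined sheaf satisfying the isomorphism condition (3). This requires showing that the transition maps between neighboring local fibrations are, after a center-of-mass averaging, $\varkappa(\delta)$-close to affine maps preserving the nilpotent structure, and that this averaging can be performed $O(n)$-equivariantly so the whole construction descends from $F(M)$ to $M$. Carrying out this patching \emph{simultaneously} with the curvature-derivative bounds of (3) is the delicate step, and it is precisely where the constants $\rho,\epsilon$ and the integer $k$ get pinned down in terms of $\delta$ and $n$.
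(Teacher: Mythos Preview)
The paper does not prove this theorem; it is quoted verbatim from \cite{CFG1992} as a preliminary result and used as a black box. There is therefore no proof in the paper to compare your proposal against. Your sketch is a faithful outline of the Cheeger--Fukaya--Gromov strategy (pass to the frame bundle, apply the equivariant Fukaya fibration locally, glue via center-of-mass averaging, smooth and average to obtain the $\mathfrak n$-invariant metric with the stated regularity), and the paper's Section~1.2 recalls exactly this construction in summary form immediately after the theorem statement. So your proposal is correct in spirit, but it is a sketch of the proof in \cite{CFG1992}, not of anything the present paper supplies.
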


By its construction in \cite{CFG1992}, $\mathfrak n$ is induced by an $O(n)$-invariant nilpotent Killing structure on an $O(n)$-equivariant affine bundle $(FM,Y,f)$, where $FM$ is the  orthonormal frame of $M$ with a canonical metric induced by a bi-invariant metric on $O(n)$ and the Levi-Civita connection of $g$. 

Conversely, let $\mathfrak n$ be a nilpotent Killing structure on $(M,g)$ and $g$ is $\mathfrak n$-invariant. Then by definition, the differential of local actions of $\mathfrak n$ gives rise to a nilpotent Killing structure $\tilde {\mathfrak n}$ on $FM$, which corresponds to a canonical nilpotent structure on an $O(n)$-equivariant affine bundle, whose quotient is $\mathfrak n$.

We call two fiber bundles $(X_i,Y_i,f_i)$ $(i=1,2)$ to be \emph{isomorphic} if there are diffeomorphisms $\Phi:X_1\to X_2$ and $\Psi:Y_1\to Y_2$ such that $\Psi\circ f_1=f_2\circ \Phi$. Two affine bundles are \emph{isomorphic} if such $\Phi$ also preserves the affine structure. Clearly, $\Phi$ is affine if and only if $\Phi$ is $\mathfrak n$-equivariant, i.e., $\Phi$ commutes with the local actions of the canonical nilpotent Killing structure.

\section{Outline of proof Theorem \ref{thm-lip-stable}}
Our main Theorem \ref{thm-lip-stable} is a consequence of the following. 

\begin{theorem}\label{main-techthm}
	Given any real number $L_0\ge 1$, $\delta_0>0$, and positive integers $n>m\ge 1$, there is $\epsilon_0(L_0,\delta_0,n)>0$ (independent of $\lambda\ge 1$) such that the following holds.
	
	Let $M^n$ be a complete $n$-manifold and $(M^n,Y_i^m,f_i)$ ($i=1,2$) be affine bundles over $m$-manfolds $Y_i^m$ respectively. Let $g_i$ be an affine-invariant (w.r.t. $f_i$) Riemannian metric on $M$ such that 
	\begin{equation}\label{bounded-curv}
	|\sec(M,g)|\le 1, \quad |\sec(Y_i,h_i)|\le 1,
	\end{equation}
	where $h_i$ is the quotient metric of $g_i$ on $Y_i$.
	Assume that
	\begin{enumerate}
		\numberwithin{enumi}{theorem} 
		\item[$(\ref{main-techthm}.2)$]\label{Lip-equiv} the metrics $g_1,g_2$ are $L_0$-equivalent, i.e., $L_0^{-1}g_2\le g_1\le L_0g_2$, and
		\item[$(\ref{main-techthm}.3)$]\label{eps-collpase}
		for any $p\in Y_i$, the diameter and the second fundamental form of $f_i$-fiber $F_{i,p}=f_i^{-1}(p)$ satisfies $$\diam_{g_i} F_{i,p} \le \epsilon\cdot \min\{1,  \injrad_{h_i}(p)\}, \quad |\sndf_{F_{i,p}}|\le \delta_0.$$
	\end{enumerate}	
	Then there is an affine bundle isomorphism $(\Phi, \Psi)$ such that $f_2\circ\Phi = \Psi\circ f_1$.
	
	If in addition, there is a Lie group $G$ acting isometrically on both of $(M,g_i)$ and $(Y_i,h_i)$ so that $f_i$ is a $G$-equivariant affine bundle, then the diffeomorphisms in the bundle isomorphism between $(M,Y_i,f_i)$ are also $G$-equivariant.
\end{theorem}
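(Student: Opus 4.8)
The plan is to manufacture the bundle isomorphism out of a pointwise comparison of the two vertical distributions $\mathcal V_{f_1}$ and $\mathcal V_{f_2}$, and then to promote the resulting smooth fiber-preserving diffeomorphism to an \emph{affine} one by exploiting the fact that a nilpotent Killing structure is pinned down by short geodesic loops. The guiding observation is that each fiber $F_{i,p}$ is an infra-nilmanifold absorbing all $g_i$-collapsed directions, so $\mathcal V_{f_i}(x)$ is exactly the span at $x$ of the values of the $\mathfrak n_i$-Killing fields, i.e.\ the directions along which $g_i$ collapses on the fiber scale. First I would prove that $\mathcal V_{f_1}(x)$ and $\mathcal V_{f_2}(x)$ are $\varkappa(\epsilon|L_0,\delta_0,n)$-close as $(n-m)$-planes at every $x\in M$. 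Because $g_1$ and $g_2$ are $L_0$-equivalent, a loop short in $g_1$ is short, up to the factor $L_0$, in $g_2$ and conversely; after lifting to the orthonormal frame bundle $FM$, where the structures regularize and the $O(n)$-action is available, the subgroups of the local fundamental group generated by $g_i$-small loops coincide on each collapsing length scale (this is the mechanism behind Remark 0.4.1), and this forces the spans of the associated Killing fields, namely the two vertical distributions, to agree to within $\varkappa(\epsilon)$.

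Granting this closeness, $f_2$ is $\varkappa(\epsilon)$-constant along $f_1$-fibers, since its differential nearly annihilates $\mathcal V_{f_1}\approx \mathcal V_{f_2}$; averaging over each compact fiber $F_{1,p}$ by a center-of-mass construction in $(Y_2,h_2)$ — legitimate because the collapsing hypothesis together with $|\sec(Y_2,h_2)|\le 1$ confines $f_2(F_{1,p})$ to a ball small relative to the injectivity radius — yields a well-defined smooth map $\Psi\colon Y_1\to Y_2$ with $\Psi\circ f_1$ and $f_2$ pointwise $\varkappa(\epsilon)$-close. To build $\Phi$ I would fix, for each bundle, the local trivialization $\varphi$ obtained from horizontal lifting as in \eqref{def-varphi}--\eqref{def-varphi2}, and use the estimates \eqref{lem-second-fundamental-co-Lip}--\eqref{lem-integrability} on $\dif\varphi_2$ — controlled by $\sndf_{f_i}$ (bounded by $\delta_0$ through the collapsing hypothesis), by the Lipschitz and co-Lipschitz constants of $f_i$, and by $|\sec(Y_i,h_i)|\le 1$ — to patch these local pieces into a global fiber-preserving diffeomorphism $\Phi\colon M\to M$ covering $\Psi$. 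The patching is again carried out by a center-of-mass average inside the nilmanifold fibers, the almost-equidistance estimate \eqref{almost-equidistance} guaranteeing that overlapping trivializations differ by maps $\varkappa(\epsilon)$-close to affine motions of the fiber.

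The decisive point is to verify that $\Phi$ is affine-equivariant, i.e.\ $\Phi_*\mathfrak n_1=\mathfrak n_2$. Here I would work on the normal covers $\tilde U$ furnished by the definition of a nilpotent Killing structure: the isometric action $\rho_i$ of the simply connected nilpotent group $N_U^{(i)}$ is generated by the $\mathfrak n_i$-Killing fields, whose integral curves trace out the orbits of the short loops. Since the two short-loop subgroups have already been identified after the frame-bundle regularization, the groups $N_U^{(1)}$ and $N_U^{(2)}$ and their affine actions on a common fiber must be intertwined by $\Phi$; the equality of base dimensions $\dim Y_1=\dim Y_2=m$, together with purity, excludes the proper-subsheaf alternative and delivers a genuine sheaf isomorphism. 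I expect this identification of the two nilpotent actions — rather than the smooth bundle matching — to be the main obstacle, because it requires carrying the flat affine connection along fibers through the averaging and checking that $\Phi$ intertwines the two families of local isometries \emph{exactly}, not merely up to $\varkappa(\epsilon)$; this is precisely where the coincidence of collapsing scales forced by $L_0$-equivalence is indispensable.

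Finally, for the equivariant statement, whenever a compact $G$ acts isometrically on each $(M,g_i)$ and $(Y_i,h_i)$ with $f_i$ being $G$-equivariant, I would replace every averaging step — the center-of-mass defining $\Psi$ and the patching defining $\Phi$ — by its $G$-invariant version. Since $G$ commutes with the $\mathfrak n_i$-actions and preserves the affine structure on every fiber, the resulting $\Psi$ and $\Phi$ are automatically $G$-equivariant and their affine-equivariance is retained, so $(\Phi,\Psi)$ is the desired $G$-equivariant affine bundle isomorphism.
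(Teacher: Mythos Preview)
Your three-step outline --- prove the vertical distributions are close, build $\Psi$ by averaging $f_2$ over $f_1$-fibers, build a fiber-preserving $\Phi$, then affinize --- matches the paper's architecture (Propositions~\ref{prop-distribution-close}, \ref{prop-bi-Lip-transformation}, \ref{prop-bundle-diffeo}, \ref{prop-affine-isom}). But two of your technical mechanisms diverge from the paper's, and the affinization step has a genuine gap.

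For the closeness of $\mathcal V_{f_1}(x)$ and $\mathcal V_{f_2}(x)$ you appeal to short geodesic loops and lift to the frame bundle. The paper does \emph{not} use $FM$ here; the frame bundle appears only when deducing Theorem~\ref{thm-lip-stable} \emph{from} Theorem~\ref{main-techthm}, not in proving Theorem~\ref{main-techthm} itself. Instead (Section~\ref{sec-weak-c1close}) the paper pulls both submersions back to the iterated tangent space $T_o(T_xM)$, blows up by $\hat\varepsilon_j^{-1}$ tied to $\injrad_{h_1}(f_1(x))$, and uses Cheeger--Gromov $C^{1,\alpha}$ convergence plus the quasi-isometry $\psi$ of Lemma~\ref{lem-quasi-isom} to show the limiting linear fibrations share fibers. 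Your short-loop picture is the heuristic of Remark~\ref{rem-main}.1, not a proof; turning it into one would essentially reconstruct the nilpotent structure, whereas the blow-up argument uses only the raw hypotheses on $f_i$.

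For $\Phi$, you propose patching local trivializations by center of mass. The paper avoids patching entirely: $\Phi_1(x)$ is defined globally as the endpoint of the $f_1$-horizontal lift of the minimal geodesic in $Y_1$ from $f_1(x)$ to $\Psi^{-1}\!\circ f_2(x)$, and the estimates \eqref{lem-second-fundamental-co-Lip}--\eqref{lem-integrability} are used only to check that $d\Phi_1$ is nondegenerate (Lemma~\ref{lem-bundle-isomorphism}, Proposition~\ref{prop-bundle-diffeo}). This is simpler and, as noted in Remark~\ref{rem-isotopy}, does not require a lower bound on the normal injectivity radius of fibers.

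The real gap is the affinization. You correctly flag that $\Phi$ must intertwine the two nilpotent actions \emph{exactly}, not just up to $\varkappa(\epsilon)$, but you do not say how this is achieved; ``the groups \ldots\ must be intertwined by $\Phi$'' is an assertion, not a mechanism. The paper's device is this: on the universal cover $\tilde U$ of a saturated neighborhood, the lattices $\Lambda\cap N_i$ coincide (both equal the deck group of the fiber), so Mal'cev rigidity identifies $N_1\cong N_2=N$. The resulting actions $\rho_1,\rho_2$ of $N$ agree on $\Lambda$ and are shown to be $C^1$-close by a second blow-up argument (Lemma~\ref{lem-action-close}). Only then does one define $\tilde\Phi_2(x)$ as the Grove--Karcher center of mass of $h\mapsto \rho_1(h^{-1})\rho_2(h)(x)$ over $\Lambda\backslash N$, which yields exact $N$-equivariance (Proposition~\ref{prop-affine-isom}). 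Without the Mal'cev identification and this specific averaging you have no mechanism forcing the two flat affine connections to match, and your sketch stops precisely where that work begins.
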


Note that, in Theorem \ref{main-techthm} we do not assume $f_i$-fibers absorb all collapsed directions. Hence, potentially $Y_i$ maybe also collapse. 

Compared to the earlier stability results (\cite[section 7]{CFG1992}, \cite{Kapovitch2007Perelman}, \cite{Rong2012Stability}, \cite{LiXu18}, etc.), our main improvement here is that no fiberwise closeness (nor $C^1$-closeness) of $f_i$ are required.

Theorem \ref{thm-lip-stable} is a corollary of Theorem \ref{main-techthm}. Indeed, by (\ref{cond-Lip-equiv}), up to a shift of the collapsing scale, the nilpotent Killing structure $\mathfrak n_i$ $(i=1,2)$ corresponding to $g_i$ can be chosen of the same dimension. According to \cite{CFG1992} (or see section 2.2), there is a complete Riemannian manifold $Y_i$ and an $O(n)$-invariant affine bundle, $(FM,Y_i,\tilde f_i)$, on the orthogonal frame bundle $FM$, whose canonical nilpotent Killing structure descends $O(n)$-equivariantly to $\mathfrak n_i$ on $(M,g_i)$. Since there are nearby $\mathfrak n_i$-invariant metrics $g_{i,\epsilon}$ of uniformly bounded sectional curvature, without loss of generality we assume that $g_i$ (resp. the induced metric $\tilde g_i$ on $FM$) itself is $\mathfrak n_i$-invariant (resp. $O(n)$-invariant and $\mathfrak n_i$-invariant). Thus Theorem \ref{thm-lip-stable} is reduced to the stability of affine bundles with invariant metrics. Let $\tilde \Phi$ be the affine bundle isomorphism between $(FM, Y_i,\tilde f_i)$ provided by Theorem \ref{main-techthm}. Then its $O(n)$-quotient is the desired diffeomorphism in Theorem \ref{thm-lip-stable}.

The main part of this paper is devoted to the proof of Theorem \ref{main-techthm}, which is divided into three steps: 

Step 1. construct a $e^\epsilon L_0$-bi-Lipschitz diffeomorphism $\Psi:Y_1\to Y_2$ such that $d(\Psi\circ f_1,f_2)\le 2L_0\epsilon$. See Proposition \ref{prop-bi-Lip-transformation}.

Step 2. construct a diffeomorphism $\Phi_1:M\to M$ such that $\Phi_1$ is $2L_0^2\epsilon$-close to $\operatorname{Id}_M$ (measured in $g_1$), $f_2=\Psi\circ f_1\circ \Phi_1$, and for any $p\in Y_1$ and $F_{1,p}=f_1^{-1}(p)$, the restriction $\Phi_1|_{F_{2,p}}:F_{2,p}\to F_{1,\Psi(p)}$ is $e^\epsilon L_0$-bi-Lipschitz. See Proposition \ref{prop-bundle-diffeo}.

Step 3. modify $\Phi_1$ to get a bundle isomorphism $\Phi_2:M\to M$ that preserves the affine bundle structure. See Proposition \ref{prop-affine-isom}.

If in addition, $f_i$ is $G$-equivariant, where $G$ acts by isometries, then so are $\Phi_1$, $\Phi_2$ and $\Psi$.

The key in first two steps is a \emph{weak $C^1$-closeness} between $f_i$ in the following sense, whose proof will be carried out in Section 3.

\begin{proposition}[Weak $C^1$-closeness of $f_i$] \label{prop-distribution-close}  Let $f_i:(M^n,g_i)\to (Y_i^m,h_i)$ ($i=1,2$) be two $\epsilon$-Riemannian submersions satisfying (\ref{bounded-curv}), (\ref{Lip-equiv}.2), (\ref{eps-collpase}.3) and $|\nabla^2 f_i|\le \delta_0$ $(i=1,2)$. Then the followings hold.
	\begin{enumerate}
		\item\label{prop-vertical-close} For any $x\in M$, the dihedral angle measured in $g_i$ between vertical subspaces $\mathcal V_{f_1}(x)$ and $\mathcal V_{f_2}(x)$ $\le \varkappa(\epsilon\,|\,L_0, \delta_0, n)$.
		\item\label{prop-diff-close1} For any $f_1$-horizontal vector $w\in T_xM$ with $|w|_{g_1}=1$, 
		$$e^{-\varkappa(\epsilon|L_0,\delta_0, n)}L_0^{-1}\le |df_2(w)|\le  e^{\varkappa(\epsilon|L_0,\delta_0, n)}L_0.$$
	\end{enumerate}
\end{proposition}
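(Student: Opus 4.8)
The plan is to reduce part \eqref{prop-diff-close1} to part \eqref{prop-vertical-close}, and then to attack \eqref{prop-vertical-close} by a confinement argument for a single geodesic. For the reduction, the upper bound in \eqref{prop-diff-close1} is immediate: since $f_2$ is $e^\epsilon$-Lipschitz for $g_2$ and $|w|_{g_2}\le L_0^{1/2}|w|_{g_1}=L_0^{1/2}$, one gets $|df_2(w)|\le e^\epsilon L_0^{1/2}\le e^{\varkappa}L_0$. For the lower bound, decompose $w=w_v+w_h$ $g_2$-orthogonally with $w_v\in\mathcal V_{f_2}(x)$ and $w_h\in\mathcal H_{f_2}(x)$, so that $|df_2(w)|=|df_2(w_h)|\ge e^{-\epsilon}|w_h|_{g_2}$. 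Because $w$ is $f_1$-horizontal we have $|df_1(w)|\ge e^{-\epsilon}$, while part \eqref{prop-vertical-close} forces the $f_2$-vertical part $w_v$ to be nearly $f_1$-vertical, so $|df_1(w_v)|\le\varkappa\,|w_v|_{g_2}$ is small; subtracting gives a definite lower bound $|df_1(w_h)|\ge\tfrac12 e^{-\epsilon}$, and then $|w_h|_{g_2}\ge L_0^{-1/2}e^{-\epsilon}|df_1(w_h)|$ yields $|df_2(w)|\ge e^{-\varkappa}L_0^{-1}$. The loss of the full factor $L_0$ here is exactly the price of passing from $g_1$-orthogonality to $g_2$-orthogonality.

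The heart of the matter is \eqref{prop-vertical-close}, i.e.\ showing $|df_2(v)|\le\varkappa(\epsilon|L_0,\delta_0,n)$ for every $g_1$-unit $v\in\mathcal V_{f_1}(x)$; since $\dim\mathcal V_{f_1}=\dim\mathcal V_{f_2}=n-m$, a one-sided bound of this kind for all such $v$ already forces the dihedral angle to be $\varkappa$-small. Put $\hat v=v/|v|_{g_2}$ and let $\tilde c(t)=\exp^{g_2}_x(t\hat v)$ be the $g_2$-geodesic it generates. The curve $\beta(t)=f_2(\tilde c(t))$ then has clean second-order behaviour: since $\tilde c$ is a $g_2$-geodesic and $\nabla^2 f_2$ is the $g_2$-Hessian, $\nabla^{h_2}_{\beta'}\beta'=(\nabla^2 f_2)(\tilde c',\tilde c')$, so $|\beta''|\le\delta_0$ with \emph{no} connection-difference terms, while $|\beta'(0)|=|df_2(\hat v)|$ is the quantity to be bounded. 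The key geometric input is that $\tilde c$ stays trapped near the small fibre $F_{1,p}$: because $df_1(\hat v)=0$ and (modulo the comparison flagged below) $|(f_1\circ\tilde c)''|$ is bounded, $f_1(\tilde c(t))$ remains within $O(t^2)$ of $p$, so by the almost-equidistance estimate \eqref{almost-equidistance} the point $\tilde c(t)$ lies within $g_1$-distance $O(t^2)$ of $F_{1,p}$, hence within $g_2$-distance $R(t)=e^{\varkappa}L_0^{1/2}(\diam_{g_1}F_{1,p}+O(t^2))$ of $x$. Thus $\beta$ is a curve in $(Y_2,h_2)$ with $|\beta''|\le\delta_0$ confined to the ball $B_{R(t)}(\beta(0))$. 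Comparing with the second-order Taylor expansion in normal coordinates of $Y_2$ (legitimate since $|\sec(Y_2,h_2)|\le1$) gives $|\beta'(0)|\,t\le R(t)+\tfrac12\delta_0 t^2$, and optimising at $t_\ast\sim(\diam_{g_1}F_{1,p})^{1/2}$ yields $|df_2(\hat v)|\le\varkappa(\epsilon|L_0,\delta_0,n)$, using $\diam_{g_1}F_{1,p}\le\epsilon$. Rescaling by $|v|_{g_2}\in[L_0^{-1/2},L_0^{1/2}]$ gives the same bound for $|df_2(v)|$.

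The \emph{main obstacle}, and the only place where two-sided curvature bounds are genuinely needed, is the step flagged above: a $g_2$-geodesic is required to interact well with the $f_1$-fibration, i.e.\ one must bound $|(f_1\circ\tilde c)''|=|(\nabla^{g_2}df_1)(\tilde c',\tilde c')|$ while only $|\nabla^{g_1}df_1|\le\delta_0$ is assumed. These differ by $df_1(S(\cdot,\cdot))$, where $S=\nabla^{g_1}-\nabla^{g_2}$ is the difference tensor of the two Levi-Civita connections; $S$ is a first-order quantity and is \emph{not} controlled by the bi-Lipschitz bound alone. The plan is to show $|S|\le C(n,L_0)$ from the hypotheses $|\sec(M,g_i)|\le1$ $(i=1,2)$ together with $L_0$-equivalence, via $C^{1,\alpha}$ harmonic-coordinate control of bounded-curvature metrics; since $S$ is scale-invariant as a tensor and both metrics rescale simultaneously, collapse is harmless — one rescales to unit injectivity radius, applies the harmonic-radius estimate there, and reads off a uniform pointwise bound. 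This connection comparison is precisely what replaces the fibrewise $C^1$-closeness assumption of earlier stability results; and once $|S|$ is bounded it enters only through the confinement radius $R(t)$, not through $|\beta''|$, so a mere bound (not smallness) suffices.

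Finally, a technical point is that $Y_i$ may itself collapse, so $\injrad_{h_i}(p)$ need not be bounded below; the normalisation $\diam_{g_i}F_{i,p}\le\epsilon\min\{1,\injrad_{h_i}(p)\}$ is designed exactly so that the optimal time $t_\ast\sim(\diam_{g_1}F_{1,p})^{1/2}$ stays well inside the injectivity radius of $Y_2$ at the relevant scale, keeping both the normal-coordinate comparison and the lifting behind \eqref{almost-equidistance} valid. The identities $|\sndf_{F_{1,p}}|\le e^\epsilon|\nabla df_1|$ and the corresponding control of $A_{f_1}$ in \eqref{eq-C2-bound} are what convert the hypothesis $|\nabla^2 f_i|\le\delta_0$ into the fibre estimates used above, and all resulting error terms retain the stated $\varkappa(\epsilon|L_0,\delta_0,n)$ form.
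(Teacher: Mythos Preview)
Your approach and the paper's differ substantially. The paper does not attempt a direct estimate; it argues by contradiction and compactness. Assuming a sequence with $\epsilon_j\to 0$ at which the conclusion fails at $x_j$, it lifts both submersions to the iterated tangent space $T_o(T_{x_j}M_j)$ (where both pullback metrics acquire injectivity radius $\gtrsim L_0^{-1}$), rescales by $\hat\varepsilon_j^{-1}$ with $\hat\varepsilon_j=\epsilon_j^{1/2}\injrad_{h_1}(f_1(x_j))$, and passes to Cheeger--Gromov $C^{1,\alpha}$-limits for each metric separately. The link between the two limits is the quasi-isometry $\psi_j:Y_{1,j}\to Y_{2,j}$ of Lemma~\ref{lem-quasi-isom}: after rescaling its additive error is $\le\sqrt{\epsilon_j}$, so $\psi_j$ converges to a Lipschitz map $\psi_\infty$ satisfying $\psi_\infty\circ\tilde f_{1,\infty}=\tilde f_{2,\infty}\circ I_\infty$, and both \eqref{prop-vertical-close} and \eqref{prop-diff-close1} are read off simultaneously from the coincidence of the limit fibres. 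There is no separate reduction of \eqref{prop-diff-close1} to \eqref{prop-vertical-close}; your reduction is fine but unnecessary in the paper's scheme.

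Your direct route has a genuine gap at exactly the step you flag as the main obstacle. The claim that $|S|=|\nabla^{g_1}-\nabla^{g_2}|\le C(n,L_0)$ follows from $|\sec(M,g_i)|\le 1$ together with $L_0$-equivalence, via harmonic-coordinate estimates, is false. Harmonic coordinates for $g_1$ give $C^{1,\alpha}$ control on $g_1$; $L_0$-equivalence then yields only $C^0$ control on $g_2$ in that same chart. Bounded curvature of $g_2$ gives $C^{1,\alpha}$ control on $g_2$ in its \emph{own} harmonic chart, but nothing forces the transition between the two charts to be $C^2$-bounded, so no bound on $\partial g_2$ in the $g_1$-chart --- and hence none on $S$ --- follows. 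Concretely: on $\mathbb R^2$ take $g_1$ Euclidean and $g_2=\phi^*g_1$ with $\phi(x,y)=(x+f(y),y)$, where $|f'|\le 1$ but $|f''|$ is arbitrary. Both metrics are flat and $L_0$-equivalent for $L_0=3$, yet $S(\partial_y,\partial_y)=-f''\,\partial_x+f'f''\,\partial_y$ is unbounded, and its $f_1$-horizontal part $df_1(S(\partial_y,\partial_y))=-f''$ is precisely the term that enters your confinement bound for $f_1\circ\tilde c$. Scale-invariance of $S$ does not help, since this example is itself scale-invariant. Without a bound on $S$ your Taylor-confinement step for $f_1\circ\tilde c$ fails, and with it the argument for \eqref{prop-vertical-close}. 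The paper's compactness scheme is organised precisely so as to avoid any pointwise comparison of the two connections.
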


For any $p\in Y_1$, after fixing a point $x\in f_1^{-1}(p)$, a smooth map $\psi_{p,x}$  can be defined by
\addtocounter{theorem}{1}
\begin{equation}\label{local-diffeo}
\psi_{p,x}:B_r(p)\to Y_2, \quad q \mapsto \psi_{p,x}(q)=f_2(\tilde \gamma_q(1)),
\end{equation}
where $0<r<\injrad_{h_1}(p)$, 
and $\tilde\gamma_q(t):[0,1]\to M$ is the $f_1$-horizontal lifting at $x$ of the unique minimal geodesic $\gamma_q:[0,1]\to Y_1$ from $p$ to $q$. 
By Proposition \ref{prop-distribution-close}, $\Psi_{p,x}$ is a
$L_0e^{\varkappa(\epsilon|L_0,\delta_0,n)}$-bi-Lipschitz diffeomorphism\footnote{This property roots back to an observation by Xiang Li and Xiaochun Rong, see \cite{Li2011phdthesis}.} from $B_r(p)$ onto an open set $V\subset Y_2$; see Remark \ref{rem-local-diffeo} below. 
The map $\Psi$ is essentially an average of local diffeomorphisms $\{\psi_{p,x}\}_{x\in f_1^{-1}(p)}$; see Section 4.

Our construction of $\Phi_1$ in Section 5 is via $f_1$-horizontal liftings of minimal geodesics, which is different from \cite[Proposition A.2.2]{CFG1992} by the normal projection on fibers through minimal geodesics. It requires only weak $C^1$-closeness and $C^0$-closeness of $\Psi\circ f_1$ and $f_2$. The method in \cite{CFG1992} still works with some additional arguments to guarantee $\Psi\circ \tilde f_1$ and $f_2$ to be $C^1$-close; see Remark \ref{rem-C1-close}.

In Section 6, we will further prove that, after identifying the simply connected nilpotent groups associated to $\mathfrak n_i$ by their lattice, their actions on the universal cover of local neighborhoods of points in $M$ are $C^1$-close; see Lemma \ref{lem-action-close}. Then $\Phi_2$ is obtained by the same average method in \cite{CFG1992}, i.e., averaging over a infra-nil fiber so that actions of the corresponding nilpotent group are conjugate on a local cover.

\section{Weak $C^1$-Closeness}\label{sec-weak-c1close}
In this section we prove Proposition \ref{prop-distribution-close}. 
Let $f_i:(M^n,g_i)\to (Y^m_i,h_i)$ $(i=1,2)$ be two $\epsilon$-almost Riemannian submersions which satisfy $|\sec(M,g_i)|\le 1$, $|\sec(Y_i,h_i)|\le 1$ and the following three conditions:
\addtocounter{theorem}{1}
\begin{enumerate}
	\numberwithin{enumi}{theorem}
	\item\label{def-collapse-lip-equiv} $g_i$ are $L_0$-equivalent, i.e., $L_0^{-1}g_2\le g_1\le L_0g_2$.
	\item\label{def-collapse-fiber} for any $p\in Y_i$, the intrinsic diameter of the $f_i$-fiber $F_{i,p}=f_i^{-1}(p)$ satisfies $\diam_{g_i} F_{i,p} \le \epsilon\cdot \min\{1,  \injrad_{h_i}(p)\}$.
	\item\label{def-collapse-2ndcontrol} the second fundamental form of $f_i$ satisfies $|\nabla^2 f_i|_{C^0} \le \delta$.
\end{enumerate}	

Note that no uniformly injectivity radius on $(Y_i,h_i)$ is assumed, and a prior there is no bound between $\injrad_{h_1}(f_1(x))$ and $\injrad_{h_2}(f_2(x))$ for a point $x\in M$. 

A key observation is that, after lifting $f_i$ to the iterated tangent space $T_o(T_xM)$ and blowing up the pull-back metrics, they would be close to two linear maps respectively, such that up to a diffeomorphic chart transformation, their fibers coincide with each other.

The proof is based on a quasi-isometry $\psi:(Y_1,h_1)\to (Y_2,h_2)$ such that $\psi\circ f_1$ is close to $f_2$, which naturally defined by a shift between $f_i:(M,g_i)\to (Y_i,h_i)$ $(i=1,2)$ below.

For $p\in Y_1$, let us define $\psi(p)$ to be a point in $f_2(f_1^{-1}(p))$.
Then by (\ref{def-collapse-lip-equiv}), for any $x\in M$ and $p=f_1(x)$, 
\addtocounter{theorem}{1}
\begin{equation}\label{close-by-quasi-isometry}
d(\psi(f_1(x)),f_2(x))\le  e^{\epsilon}\cdot \diam_{g_2}F_{1,p}\le e^\epsilon L_0\diam_{g_1}F_{1,p}.
\end{equation}
Moreover, it is easy to see that
\begin{align}\label{quasi-isometry-dist-1}
d(\psi(p),\psi(q))  \le  e^\epsilon L_0d_{H,g_1}(F_{1,p},F_{1,q})+e^\epsilon L_0 \diam_{g_1} F_{1,p},\quad \text{and}\qquad\\
d(\psi(p),\psi(q)) \ge 
e^{-\epsilon}L_0^{-1}d_{H,g_1}(F_{1,p},F_{1,q})-e^\epsilon \diam_{g_2}F_{2,p}-e^\epsilon L_0 \diam_{g_1}F_{1,p}.\notag
\end{align}
Since by (\ref{almost-equidistance}), $d_{H,g_1}(F_{1,p},F_{1,q})$ is proportional to $d(p,q)$ by $e^{\epsilon}$, we have
\begin{lemma}\label{lem-quasi-isom}
	The map $\psi$ is an $(e^{2\epsilon} L_0, 2e^{\epsilon}L_0 \epsilon)$-quasi-isometry.
\end{lemma}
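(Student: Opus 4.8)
The plan is to assemble the two-sided bound and the density condition of definition (\ref{def-quasi-isometry-1}) directly from the estimates already recorded in (\ref{quasi-isometry-dist-1}), the almost-equidistance comparison (\ref{almost-equidistance}) for the submersion $f_1$, and the fiber-diameter hypothesis (\ref{def-collapse-fiber}). Since (\ref{almost-equidistance}) gives $e^{-\epsilon}d(p,q)\le d_{H,g_1}(F_{1,p},F_{1,q})\le e^{\epsilon}d(p,q)$, the Hausdorff distance between fibers agrees with $d(p,q)$ up to the factor $e^{\pm\epsilon}$; substituting this into the two lines of (\ref{quasi-isometry-dist-1}) reduces everything to controlling the fiber-diameter error terms.

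For the upper bound I would feed $d_{H,g_1}(F_{1,p},F_{1,q})\le e^{\epsilon}d(p,q)$ into the first line of (\ref{quasi-isometry-dist-1}) to get $d(\psi(p),\psi(q))\le e^{2\epsilon}L_0\,d(p,q)+e^{\epsilon}L_0\,\diam_{g_1}F_{1,p}$, and then bound $\diam_{g_1}F_{1,p}\le\epsilon$ by (\ref{def-collapse-fiber}); this already lands below $e^{2\epsilon}L_0\,d(p,q)+2e^{\epsilon}L_0\epsilon$. For the lower bound, substituting $d_{H,g_1}(F_{1,p},F_{1,q})\ge e^{-\epsilon}d(p,q)$ into the second line of (\ref{quasi-isometry-dist-1}) yields $d(\psi(p),\psi(q))\ge e^{-2\epsilon}L_0^{-1}d(p,q)-e^{\epsilon}\diam_{g_2}F_{2,p}-e^{\epsilon}L_0\diam_{g_1}F_{1,p}$. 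The remaining task is to absorb the two negative error terms into the additive constant: using $\diam_{g_2}F_{2,p}\le\epsilon$ and $\diam_{g_1}F_{1,p}\le\epsilon$ from (\ref{def-collapse-fiber}) together with $L_0\ge 1$, so that $1+L_0\le 2L_0$, the combined error is at most $e^{\epsilon}\epsilon(1+L_0)\le 2e^{\epsilon}L_0\epsilon$, which is exactly what the required lower bound $(e^{2\epsilon}L_0)^{-1}d(p,q)-2e^{\epsilon}L_0\epsilon$ demands.

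Finally, for the density requirement I would invoke (\ref{close-by-quasi-isometry}): since $f_2$ is surjective (being an almost Riemannian submersion of the complete $M$, so that $f_2(M)=Y_2$ as already noted below (\ref{almost-equidistance})), for any $y\in Y_2$ pick $x\in f_2^{-1}(y)$ and set $p=f_1(x)$; then (\ref{close-by-quasi-isometry}) gives $d(\psi(p),y)=d(\psi(f_1(x)),f_2(x))\le e^{\epsilon}L_0\diam_{g_1}F_{1,p}\le e^{\epsilon}L_0\epsilon$, so $\psi(Y_1)$ is $2e^{\epsilon}L_0\epsilon$-dense. The step needing the most care is the bookkeeping in the lower bound, namely checking that both fiber-diameter terms — one measured in $g_2$ along an $f_2$-fiber, the other in $g_1$ along an $f_1$-fiber — are simultaneously dominated by (\ref{def-collapse-fiber}) and collapse into the single additive constant $2e^{\epsilon}L_0\epsilon$; everything else is a direct substitution.
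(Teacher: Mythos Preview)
Your proposal is correct and follows exactly the route the paper takes: the paper's entire argument is the single sentence preceding the lemma, noting that (\ref{almost-equidistance}) makes $d_{H,g_1}(F_{1,p},F_{1,q})$ proportional to $d(p,q)$ by $e^{\pm\epsilon}$, and leaves the substitution into (\ref{quasi-isometry-dist-1}) and the bookkeeping with (\ref{def-collapse-fiber}) to the reader. You have simply written out those omitted details, including the density check via (\ref{close-by-quasi-isometry}), and your use of $L_0\ge 1$ to merge the two error terms in the lower bound is exactly the intended step.
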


Before given the proof of Proposition \ref{prop-distribution-close}, we make some preparation. 

Let us fix a point $x\in M$ and consider the exponential map of $(M,g_{2})$, $\exp_{x;g_{2}}:T_{x}M\to M$. Let
$g_{1}^*=\exp_{x;g_{2}}^*(g_{1})$ and $g^*_{2}=\exp_{x;g_{2}}^*(g_{2})$ be the pullback metric tensors on $T_{x}M$, and pull back again $g_{i}^*$ on $T_xM$ to the tangent space $T_{o}(T_xM)$ by  $\exp_{o;g_{1}^*}$ of $g_1^*$, where the pullback tensors are denoted by $g_{i}^{**}$.

Then the ball $B_{\frac{\pi}{2}}(o;g_1^*)\subset (T_o(T_xM),g_{1}^*|_{o})$, denoted again by $U$, satisfies (cf. \cite{Xu17}) 
\begin{equation}\label{C1-close-injrad}
\injrad(U,g^{**}_{i})\ge \frac{\pi}{2}L_0^{-1}, \quad   |\sec(U,g^{**}_{i})|\le 1.
\end{equation}
The lifting $\epsilon$-almost Riemannian submersions are well-defined on $U$,
\begin{equation}\label{def-lifting}
\tilde f_{i}=f_{i}\circ\exp_{x;g_2}\circ \exp_{o;g_1^{*}}: (U,g^{**}_{i})\to \tilde f_i(U)\subset (Y_i,h_i).
\end{equation}

Let $\{f_{i,j}:(M_j,g_{i,j})\to (Y_{i,j},h_{i,j})\}_{i=1,2}$ be a contradiction sequence to Proposition \ref{prop-distribution-close} with $\epsilon_j\to 0$, where the conclusion fails at a point $x_j\in M_j$.
Without loss of generality, we assume that $\injrad_{h_1}(f_{1,j}(x))\le \injrad_{h_2}(f_{2,j}(x)).$ Let
$\hat\varepsilon_j=\epsilon_j^{1/2}\cdot \injrad_{h_1}(f_1(x))$.

After blowing up with $\hat\varepsilon_j^{-1}$, by Cheeger-Gromov convergence theorem (\cite{Cheegerphdthesis,Cheeger1970Finiteness,GLP1981}, cf. \cite{GW1988,Peters1987,Kasue1989}) and (\ref{C1-close-injrad}), 
\begin{align*}
\left(U_j, o_j, \hat\varepsilon_j^{-2}g^{**}_{i,j}\right) \overset{C^{1,\alpha}}{\longrightarrow} \left(\mathbb{R}^n,g_{i,\infty}, o_i\right), \qquad j\to \infty,\\
\left(Y_{i,j},f_{i,j}(x_j),\hat\varepsilon_j^{-2}h_{i,j}\right) \overset{C^{1,\alpha}}{\longrightarrow} \left(\mathbb{R}^m,o_i\right), \qquad j\to\infty. 
\end{align*}
Then the identity map $I_j:\left(U_j, o_j, \hat\varepsilon_j^{-2}g^{**}_{1,j}\right)\to \left(U_j, o_j, \hat\varepsilon_j^{-2}g^{**}_{2,j}\right)$ converges to a smooth bi-Lipschitz map $I_\infty:(\mathbb R^n, g_{1,\infty}, o_1)\to(\mathbb R^n, g_{2,\infty}, o_2)$, which in general is not linear.

By passing to a subsequence, the lifting map defined by (\ref{def-lifting}), $$\tilde f_{i,j}:\left(U_j,o_j,\hat\varepsilon_j^{-2}g^{**}_{i,j}\right)\to 
\left(\tilde f_{i,j}(U_j),f_{i,j}(x_j),\hat\varepsilon_j^{-2}h_{i,j}\right),$$ converges to a canonical projection $\tilde f_{i,\infty}:\left(\mathbb R^n,g_{i,\infty},o\right)\to \left(\mathbb R^m, o_i\right).$

Let us consider the quasi-isometry $\psi_j:Y_{1,j}\to Y_{2,j}$ defined for $\{f_{i,j}\}$ such that $\psi_j$ maps $f_{1,j}(x_j)$ to $f_{2,j}(x_j)$. By (\ref{close-by-quasi-isometry}) and after blow-up, the distance error measured on $(Y_{2,j}, \hat\varepsilon_j^{-2}h_{2,j})$ satisfies that, for any $\tilde y\in U_j$,
\begin{equation*}
	d(\psi_j\circ \tilde f_{1,j}(\tilde y), \tilde f_{2,j}(\tilde y))\le e^{\epsilon_j} L_0\cdot\hat\varepsilon_j^{-1}  \diam_{g_{1,j}}f_{1,j}^{-1}(\tilde f_{1,j}(\tilde y)).
\end{equation*}
Moreover, for any $p,q\in (Y_{1,j},\hat\varepsilon_j^{-2}h_{1,j})$, we derive from  (\ref{quasi-isometry-dist-1}) that
\begin{equation*}
\begin{aligned}
&d\left(\psi_j(p),\psi_j(q)\right)\\
\le\; & e^{2\epsilon_j}L_0\cdot d\left(p,q\right)+
e^{2\epsilon_j}L_0 \cdot 
\hat \varepsilon_j^{-1} \cdot \min \left \{\diam_{g_{1,j}}f_{1,j}^{-1}(p), \diam_{g_{1,j}}f_{1,j}^{-1}(q)\right\}.
\end{aligned}
\end{equation*}

By the choice of $\hat\varepsilon_j$ and $|\nabla^2 f_{i,j}|\le \delta_0$, $$\hat\varepsilon_j^{-1}\cdot \diam_{g_{1,j}}f_{1,j}^{-1}(\tilde f_{1,j}(\tilde y))\le \sqrt{\epsilon_j}.$$ 

It follows from a standard diagonal procedure that a subsequence of $\psi_j$ converges to a $L_0$-Lipschitz map $\psi_{\infty}:(\mathbb R^n,o_1)\to (\mathbb R^n,o_2)$ such that 
\begin{equation}\label{limit-coincide}
	\phi_\infty\circ \tilde f_{1,\infty}=\tilde f_{2,\infty}\circ I_\infty.
\end{equation}
		
We are now ready to prove Proposition \ref{prop-distribution-close}.

\begin{proof}[Proof of Proposition \ref{prop-distribution-close}]
	 Recall that the dihedral angle between the vertical subspaces $\mathcal V_{f_1}(x)$ and $\mathcal V_{f_2}(x)$ is defined to be the Hausdorff distance $d_H(SV_{f_1}(x),SV_{f_2}(x))$ in the $g_2$-unit sphere of $T_xM$, where $SV_{f_i}(x)=\mathcal V_{f_i}\cap S_xM$ and $S_xM$ is the $g_2$-unit sphere centered at the origin of $T_xM$. Because $g_1$ and $g_2$ are $L_0$-equivalent, it makes no substantial difference, if instead, the dihedral angle is measured in $g_1$.
	
	Since $|\nabla^2f_i|\le \delta_0$, it is clear that the contradicting sequence above is $\varkappa(\epsilon)$-close to $\tilde f_{i,\infty}$ in the $C^{1,\alpha}$-norm, whose fibers, by (\ref{limit-coincide}), coincide with each other. So by a contradiction argument, we derive (\ref{prop-vertical-close}).
	
	Since the pullback metric $I_\infty^*g_{2,\infty}$ satisfies $$L_0^{-1}I_\infty^*g_{2,\infty}\le g_{1,\infty}\le L_0I_\infty^*g_{2,\infty},$$ by (\ref{limit-coincide}),
	for any $\tilde f_{1,\infty}$-horizontal vector $w\in T_{o}\mathbb R^m$ with $|w|_{g_{1,\infty}}=1$, 
	$$L_0^{-1}\le |\dif \tilde f_{2,\infty}(w)|\le L_0.$$
	By $C^{1,\alpha}$-convergence of $\tilde f_{i,j}$ again, (\ref{prop-diff-close1}) follows from a contradiction argument.
\end{proof}

\begin{remark}\label{rem-non-C1-close}
	Clearly, (\ref{prop-diff-close1}) implies a uniform control on the deviation of $f_i$-horizontal distributions from each other.
	However, they are not necessarily close. An easy example can be found on a flat torus, where the two metrics $g_1$ and $g_2$ are induced from $(\mathbb R^2,\tilde g_i)$ with two flat metrics whose orthonormal decompositions are different from each other by a definite angle. 
\end{remark}

\section{Diffeomorphism $\Psi$ between Base Spaces}\label{sec-transformation}

We are to improve the quasi-isometry $\psi$ in Lemma \ref{lem-quasi-isom} to a bi-Lipschitz diffeomorphism $\Psi:Y_1\to Y_2$ via center of mass. 

Let $(F, \nu)$ be a probability measure space and let $\imath:F\to Y_2$ be a measurable map into $(Y_2,h_2)$.  If its image $\imath(F)$ is contained in a convex ball $B_{a}(z)$ of radius $a< \frac{\pi}{6}$, then the smooth energy function $$E(y)=\frac{1}{2}\int_{F}d^2(\imath(x),y)d\nu$$ is strictly convex in $B_{3a}(z)$. It is clear that $E$ takes a unique minimum point at some point $z_1$ in the closure of $B_{2a}(z)$. We call $z_1$  the \emph{center of mass of $\imath$} (cf. \cite{GroveKarcher1973}, \cite{CFG1992}). 

\begin{proposition}\label{prop-bi-Lip-transformation}
	There is $\epsilon_0=\epsilon_0(L_0,\delta_0,n)>0$ such that for any two $\epsilon$-almost Riemannian submersions $f_i:(M,g_i)\to (Y_i,h_i)$ in Proposition \ref{prop-distribution-close} with $\epsilon<\epsilon_0$, there is a $e^{\varkappa(\epsilon|L_0,\delta_0,n)}L_0$-bi-Lipschitz diffeomorphism $\Psi:(Y_1,h_1)\to (Y_2,h_2)$ satisfying that,
	for any $p\in Y_1$ and $x\in F_{1,p}=f_1^{-1}(p)$,
	\begin{equation}\label{ineq-dist-error1}
	d(\Psi\circ f_1(x),f_2(x))\le 2L_0\diam_{g_1}F_{1,p}.
	\end{equation}
	If in addition, $f_i$ $(i=1,2)$ are $G$-equivariant, then $\Psi$ is also $G$-equivariant.  
\end{proposition}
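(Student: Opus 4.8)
The plan is to build $\Psi$ by averaging, along each $f_1$-fiber, the displacement seen by $f_2$. For $p\in Y_1$ let $\nu_p$ be the normalized Riemannian volume of the (compact, infra-nil) fiber $(F_{1,p},g_1)$, and push it forward by $f_2$ to a probability measure on $Y_2$. By (\ref{close-by-quasi-isometry}) and the collapsing condition (\ref{def-collapse-fiber}) its support $f_2(F_{1,p})$ has diameter $\le e^{\epsilon}L_0\diam_{g_1}F_{1,p}\le e^{\epsilon}L_0\epsilon\min\{1,\injrad_{h_1}(p)\}$. Using the local bi-Lipschitz diffeomorphisms $\psi_{p,x}$ of (\ref{local-diffeo}) together with $|\sec(Y_2,h_2)|\le 1$, I would first verify that $\injrad_{h_2}$ on $f_2(F_{1,p})$ is bounded below by $c(L_0)\injrad_{h_1}(p)$; hence for $\epsilon<\epsilon_0(L_0,\delta_0,n)$ this support lies in a convex ball of radius $<\tfrac{\pi}{6}$, and I define $\Psi(p)$ to be its center of mass.

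Smoothness of $\Psi$ then follows from the implicit function theorem applied to the strictly convex energy $E(y)=\tfrac12\int_{F_{1,p}}d^2(f_2(x),y)\,d\nu_p(x)$, whose Hessian at the minimizer is uniformly nondegenerate (it is $\varkappa(\epsilon)$-close to the identity since the support is tiny and $|\sec(Y_2)|\le 1$), combined with the smooth dependence of the triple $(F_{1,p},\nu_p,f_2)$ on $p$ in the locally trivial bundle $(M,Y_1,f_1)$. The distance bound (\ref{ineq-dist-error1}) is then immediate: a Karcher center of mass lies in the closed convex hull of its support, hence in $\overline{B_D(f_2(x))}$ with $D\le e^{\epsilon}L_0\diam_{g_1}F_{1,p}\le 2L_0\diam_{g_1}F_{1,p}$ for $\epsilon$ small.

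The main work, and the step I expect to be the principal obstacle, is the bi-Lipschitz estimate, which I would extract by differentiating the defining equation $\int_{F_{1,p}}\exp_{\Psi(p)}^{-1}(f_2(x))\,d\nu_p(x)=0$. Transporting points of $F_{1,p}$ to neighbouring fibers by $f_1$-horizontal lifting, the velocity of $f_2(x)$ in a base direction $v\in T_pY_1$ is exactly $d(\psi_{p,x})_p(v)=df_2(\tilde v)$, where $\tilde v$ is the horizontal lift of $v$ at $x$; the variation of $\nu_p$ under holonomy contributes only a lower-order term since it multiplies displacements that are already $\varkappa(\epsilon)$-small. Because the Hessian of $E$ is near the identity, this shows that $d\Psi_p(v)$ equals the $\nu_p$-average of the velocities $d(\psi_{p,x})_p(v)$, parallel transported to $T_{\Psi(p)}Y_2$ along the tiny support, up to an error $\varkappa(\epsilon\,|\,L_0,\delta_0,n)|v|$. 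The key point is that the linear maps $d(\psi_{p,x})_p$ are mutually $\varkappa(\epsilon)$-close: the $f_1$-horizontal distribution and $df_2$ vary at rate $\le\delta_0$ across $F_{1,p}$ (from $|\nabla^2 f_i|\le\delta_0$ and the $C^{1,\alpha}$-bounds forced by $|\sec|\le 1$), while the base points $f_2(x)$ all lie within $D$ of one another. Hence $d\Psi_p$ is $\varkappa(\epsilon)$-close to any single $d(\psi_{p,x})_p$, which is $e^{\varkappa(\epsilon)}L_0$-bi-Lipschitz by Proposition \ref{prop-distribution-close}(\ref{prop-diff-close1}); so $\Psi$ is a local diffeomorphism whose differential is uniformly $e^{\varkappa(\epsilon)}L_0$-bi-Lipschitz.

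To promote $\Psi$ to a global diffeomorphism I would run the symmetric construction, obtaining $\Psi'\colon Y_2\to Y_1$ from the fibers of $f_2$, and check that $\Psi'\circ\Psi$ is $\varkappa(\epsilon)$-close to $\operatorname{Id}_{Y_1}$ in $C^0$ (by (\ref{ineq-dist-error1})) and in $C^1$ (the two families of horizontal-lift-then-project differentials being approximate inverses). A proper map that is $C^1$-close to the identity on the connected complete manifold $Y_1$ is a diffeomorphism, so $\Psi$ is injective and $\Psi'$ surjective; by symmetry $\Psi$ is bijective, yielding the desired $e^{\varkappa(\epsilon)}L_0$-bi-Lipschitz diffeomorphism. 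Finally, when the $f_i$ are $G$-equivariant and $G$ acts by $h_i$-isometries, we have $F_{1,g\cdot p}=g\cdot F_{1,p}$ with $g_*\nu_p=\nu_{g\cdot p}$ and $f_2(g\cdot x)=g\cdot f_2(x)$, so $(f_2)_*\nu_{g\cdot p}=g_*(f_2)_*\nu_p$; since the center of mass commutes with the isometry $g$, this gives $\Psi(g\cdot p)=g\cdot\Psi(p)$, i.e. $\Psi$ is $G$-equivariant.
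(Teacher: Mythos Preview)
Your proposal is correct and follows essentially the same route as the paper: define $\Psi(p)$ as the center of mass of $f_2|_{F_{1,p}}$ with respect to the normalized volume of the fiber, differentiate the critical-point equation via the implicit function theorem, and invoke Proposition~\ref{prop-distribution-close}(\ref{prop-diff-close1}) for the bi-Lipschitz bound on $d\Psi$. Your observation that the linear maps $d(\psi_{p,x})_p=df_2(\tilde v_x)$ are mutually $\varkappa(\epsilon)$-close across $x\in F_{1,p}$ is in fact exactly what justifies the paper's assertion that the averaged vector $\intbar_{F_{1,p}}df_2(x'(0))$ has norm bounded \emph{below} by $e^{-\varkappa}L_0^{-1}$; the paper cites (\ref{prop-diff-close1}) at that point but leaves the non-cancellation implicit, so your version is if anything more complete. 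Your treatment of the volume-variation term (``lower order since it multiplies displacements already $\varkappa(\epsilon)$-small'') matches the paper's mean-curvature terms in (\ref{trans-cal}).

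The only genuine difference is in upgrading the local diffeomorphism to a global one. You build the symmetric map $\Psi':Y_2\to Y_1$ and argue that $\Psi'\circ\Psi$ is $C^1$-close to $\operatorname{Id}_{Y_1}$; this works but is more laborious than necessary. The paper instead observes that $\Psi$, like the crude map $\psi$, is an $(e^{2\epsilon}L_0,2e^{\epsilon}L_0\epsilon)$-quasi-isometry, so every fiber of $\Psi$ has diameter $\le 2e^{3\epsilon}L_0^2\epsilon$; since those fibers are connected (being fibers of a submersion over a ball), they are single points, and $\Psi$ is a diffeomorphism. This shortcut avoids having to verify the approximate-inverse relation between the two horizontal-lift-then-project constructions.
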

\begin{proof}
Since the diameter of $f_2(F_{1,p})$, $$\diam_{h_2}(f_2(F_{1,p}))\le L_0e^\epsilon\epsilon<\frac{\pi}{6},$$ let us define $\Psi(p)$ to be the center of mass of $f_2:F_{1,p}\to (Y_2,h_2)$. By definition, (\ref{close-by-quasi-isometry}) implies (\ref{ineq-dist-error1}). In the following we prove that $\Psi$ is a diffeomorphism.
		
	By definition, $\Psi(p)$ is the critical point of the energy functional $E_p(\cdot)=E(p;\cdot)$, where
	\begin{equation}
	\begin{aligned}
	E(p;y)&=\frac{1}{2}\intbar_{F_{1,p}}d^2(f_2(x),y)\dif \operatorname{vol}(x)\\
	&=\frac{1}{2\operatorname{vol}(F_{1,p})} \int_{F_{1,p}}d^2(f_2(x),y)\dif \operatorname{vol}(x)
	\end{aligned}
	\end{equation} 
	Let $(p^1,p^2,\dots,p^m)$ and $(y^1,y^2,\cdots,y^m)$ be local coordinates around $p$ and $\Psi(p)$ respectively. Then 
	$y=\Psi(p)$ is the implicit function determined by the Pfaffian equation, $\partial_y E(p;y)=0$, a system of equations in local coordinates on $Y_1\times Y_2$ below:
	\begin{equation}\label{eq-implicit-function}
	\begin{aligned}
	&E_j'(p^1,\dots,p^m;y^1,\dots,y^m)\\
	=&\intbar_{F_{1,p}} d(f_2(x),y)\left< \nabla r_{f_2(x)},\frac{\partial}{\partial y^j}\right>
	\dif \operatorname{vol}(x)=0, \quad (j=1,\dots,n)
	\end{aligned}
	\end{equation}
	where $r_y=d(y,\cdot)$ be the distance function on $(Y_2,h_2)$. 
	
	Let $E'=(E_1',\dots, E_m')$ and $\partial_y E'$ be the differential of $E'(p;\cdot)$ after fixing $p$. Then it is easy to determine $d \Psi$ by 
	\begin{equation}\label{trans-implicit-dif}
	d \Psi=-(\partial_y E')^{-1}\circ \partial_p E',
	\end{equation} 
	As for $\partial_pE'$, we may assume that $(y^1,\dots,y^m)$ is the normal coordinates at $y\in (Y_2,h_2)$. Then by identifying points and their position vectors, $$d(f_2(x),y)\nabla r_{f_2(x)}=-f_2(x).$$ Thus (\ref{eq-implicit-function}) can be rewritten as a vector equation
	$$E'(p^1,\dots,p^m;0,\dots,0)
	=\intbar_{F_{1,p}} -f_2(x)\dif\operatorname{vol}(x)=0.$$
	For any unit-speed geodesic $\gamma(t)$ with $\gamma(0)=p$, let $x(t)\in F_{1,\gamma(t)}$ be its $f_1$-horizontal lifting that starts at $x\in F_{1,p}$. By direct calculation,
	\begin{align}
	\frac{\partial E'}{\partial t}=&\frac{d}{dt}\intbar_{F_1}-f_2(x(t))\dif\operatorname{vol}(t)
	\nonumber\\
	=&-\intbar \dif f_2(x'(t))-\intbar f_2(x(t))H_{x'(t)}+\intbar f_2(x(t)) \intbar H_{x'(t)},\label{trans-cal}
	\end{align}
	where $H_{x'(t)}$ is the mean curvature of $F_{1,\gamma(t)}$ along horizontal vector $x'(t)$.
	
	By (\ref{prop-diff-close1}), the vector $\intbar_{F_{1,p}}\dif f_2(x'(0))$ has norm in $$\left[e^{-\varkappa_1(\epsilon|L_0,\delta_0,n)}L_0^{-1}, e^{\varkappa_1(\epsilon|L_0,\delta_0,n)}L_0\right].$$ At the same time, by (\ref{eq-almost-riemsub}) and (\ref{def-collapse-2ndcontrol}), 
	$$\left|f_2(x(t))H_{x'(t)}\right|\le (m-n)L_0e^{\epsilon}\delta\epsilon.$$ 
	So is the last term in (\ref{trans-cal}).
	
	Then for sufficient small $\epsilon$, $\left.\frac{\partial E'}{\partial t}\right|_{t=0}$ has norm 
	\begin{equation}\label{trans-implicit-dif-1}
	\left|\frac{\partial E'}{\partial t}\right|_{t=0}\in 
	\left[e^{-\varkappa_2(\epsilon)}L_0^{-1}, e^{\varkappa_2(\epsilon)}L_0\right].
	\end{equation}

	On the other hand, $\partial_y E'$ equals to the Hessian of $E_p(y)$, which by standard Hessian comparison, satisfies 
	\begin{equation}\label{trans-implicit-dif-2}
	\cos(L_0^{-1}e^{-\epsilon}\epsilon)h_2\le \operatorname{Hess}(E_p)\le \cosh (L_0e^{\epsilon}\epsilon)h_2.
	\end{equation}
	
	Combining (\ref{trans-implicit-dif}), (\ref{trans-implicit-dif-1}) and (\ref{trans-implicit-dif-2}), we conclude that $d \Psi$ has norm 
	$$|\dif \Psi|\in \left[\left(e^{-\varkappa(\epsilon)}L_0\right)^{-1}, e^{\varkappa(\epsilon)}L_0\right].$$
	
	Now it is easy to see that $\Psi$ is a $e^{\varkappa(\epsilon)}L_0$-bi-Lipschitz diffeomorphism and satisfies the requirements in Proposition \ref{prop-bi-Lip-transformation}. Indeed, by construction $\Psi$ is also an $(e^{2\epsilon}L_0, 2e^\epsilon L_0\epsilon)$-quasi-isometry. It follows from (\ref{def-quasi-isometry-1}) that the fiber of $\Psi$ has diameter $\le 2e^{3\epsilon}L_0\epsilon$.
	Since (\ref{def-varphi}) is a local trivialization, the fiber of $\Psi$ must be connected. Hence $\Psi$ is a bi-Lipschitz diffeomorphism.
	
	If the fiber bundles $f_i$ $(i=1,2)$ are $G$-equivariant, then by the construction above, it is clear that $\Psi$ is also $G$-equivariant.  	
\end{proof}

\begin{remark}\label{rem-C1-close}
	If in addition, the higher derivatives of $f_i$ and the curvature tensor of $Y_2$ admit uniform bounds, then so is for $\Psi:(Y_1,h_1)\to (Y_2,h_2)$ in Proposition \ref{prop-bi-Lip-transformation}. In particular, $f_1$ and $\Psi^{-1}\circ f_2$ would be $\varkappa(\epsilon)$-$C^1$-close.
\end{remark}

\section{Bundle Isomorphism $\Phi_1$ on Total Spaces}\label{sec-bundle-map}

In this section we construct a diffeomorphic bundle map $$\Phi_1:(M,Y_1,\Psi^{-1}\circ f_2)\to (M,Y_1,f_1).$$
Continue from section \ref{sec-transformation}. Let  $\Psi:(Y_1,h_1)\to (Y_2,h_2)$ be the bi-Lipschitz diffeomorphism  provided by Proposition \ref{prop-bi-Lip-transformation}. Then the composition $\hat f_2=\Psi^{-1}\circ f_2: (M,g_1)\to (Y_1,h_1)$ is a $(\varkappa(\epsilon)+2\ln L_0)$-almost Riemannian submersion, which by (\ref{ineq-dist-error1}) satisfies 
\addtocounter{theorem}{1}
\begin{equation}\label{dist-error}
d(f_1(x),\hat f_2(x))\le 2L_0^2 \diam_{h_1}F_{1,f_1(x)}.
\end{equation}
Throughout this section, we only use $g_1$ and $h_1$ and all norms are measured by them.

For any $p\in Y_1$ and $x\in F_{2,p}=\hat f_2^{-1}(p)$, let $p_x= f_1(x)$. Up to a blowup rescaling, we assume that $\injrad_{h_1}(p)=1$.
Then by (\ref{dist-error}), (\ref{def-collapse-fiber}) and (\ref{inj-lip}), 
$$d(p_x,p)\le 2L_0^2\epsilon \cdot \injrad_{h_1}(p_x)\le\frac{2L_0^2\epsilon}{1-2L_0^2\epsilon}\injrad_{h_1}(p).$$
Thus, the minimal geodesic $\gamma_x:[0,1]\to Y_1$ from $p_x=\gamma_x(0)$ to $p=\gamma_x(1)$ in $(Y_1,h_1)$ is unique and  depends smoothly on $x$. Let $\tilde \gamma_x:[0,1]\to (M,g_1)$ be the unique $f_1$-horizontal lifting of $\gamma_x$ at $x$, then we define $\Phi_1(x)=\tilde \gamma_x(1)$. 

Clearly, $\Phi_1$ depends smoothly on $x$ and lies in $F_{1,p}$, and thus it is a bundle map from $(M,Y_1,\hat f_2)$ to $(M,Y_1,f_1)$, i.e., $f_1\circ\Phi_1=\hat f_2$.

Notice that if $\dif\Phi_1$ is non-degenerated at every point $x\in M$, then $\Phi_1$ is a covering map homotopic to the identity, and hence a diffeomorphic bundle isomorphism.  Indeed,  a smooth homotopy $H$ is naturally defined by 
\begin{equation}\label{def-homotopy}
H:[0,1]\times {M}\to M, \qquad H(t,x)=\tilde \gamma_x(t),
\end{equation}
such that $H(0,\cdot )=\operatorname{Id}_{M}$, and $H(1,\cdot)=\Phi_1$.

To show $d\Phi_1$ is isomorphic, we have the following observation.
\begin{lemma}\label{lem-bundle-isomorphism}
	$d\Phi_1$ is non-degenerate if and only if $d\varphi_2(v)\neq 0$, for any $f_2$-vertical vector $v$.
\end{lemma}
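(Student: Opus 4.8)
The plan is to reduce the non-degeneracy of $\dif\Phi_1$ to a block-triangular linear-algebra computation, exploiting the bundle relation $f_1\circ\Phi_1=\hat f_2$ together with the fact that $\Phi_1$ restricts to $\varphi_2$ on each fiber. First I would record that, since $\hat f_2=\Psi^{-1}\circ f_2$ and $\Psi$ is a diffeomorphism, the fibers of $\hat f_2$ coincide with those of $f_2$; hence $\mathcal V_{\hat f_2}(x)=\mathcal V_{f_2}(x)=\ker\dif\hat f_2|_x$. Differentiating $f_1\circ\Phi_1=\hat f_2$ gives $\dif f_1|_{\Phi_1(x)}\circ\dif\Phi_1|_x=\dif\hat f_2|_x$. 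In particular, if $v$ is $f_2$-vertical then $\dif f_1(\dif\Phi_1(v))=\dif\hat f_2(v)=0$, so $\dif\Phi_1$ carries $\mathcal V_{f_2}(x)$ into $\mathcal V_{f_1}(\Phi_1(x))$. Both vertical spaces have dimension $n-m$, so this yields a well-defined linear map $A:=\dif\Phi_1|_{\mathcal V_{f_2}(x)}:\mathcal V_{f_2}(x)\to\mathcal V_{f_1}(\Phi_1(x))$ between spaces of equal dimension.

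Next I would put $\dif\Phi_1$ in block form with respect to the splittings $T_xM=\mathcal V_{f_2}(x)\oplus\mathcal H_{\hat f_2}(x)$ and $T_{\Phi_1(x)}M=\mathcal V_{f_1}\oplus\mathcal H_{f_1}$. The previous step shows the horizontal-output-from-vertical-input block vanishes, so $\dif\Phi_1=\left(\begin{smallmatrix}A&B\\0&C\end{smallmatrix}\right)$. The diagonal horizontal block $C$ is forced to be an isomorphism: for $w\in\mathcal H_{\hat f_2}(x)$ the relation $\dif f_1\circ\dif\Phi_1=\dif\hat f_2$ together with $\dif f_1|_{\mathcal V_{f_1}}=0$ gives $\dif f_1\circ C=\dif\hat f_2|_{\mathcal H_{\hat f_2}(x)}$, and since $f_1$ and $\hat f_2$ are submersions both $\dif f_1|_{\mathcal H_{f_1}}$ and $\dif\hat f_2|_{\mathcal H_{\hat f_2}(x)}$ are isomorphisms onto $T_pY_1$, whence $C=(\dif f_1|_{\mathcal H_{f_1}})^{-1}\circ\dif\hat f_2|_{\mathcal H_{\hat f_2}(x)}$ is invertible. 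Since the diagonal blocks are square, $\det\dif\Phi_1=\det A\cdot\det C$, and as $C$ is always invertible, $\dif\Phi_1$ is non-degenerate if and only if $A$ is.

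Finally I would identify $A$ with the vertical restriction of $\dif\varphi_2$. Over the fixed fiber $F_{2,p}=\hat f_2^{-1}(p)$ the endpoint $p$ of the lifted geodesic $\gamma_x$ is constant, so by construction $\Phi_1|_{F_{2,p}}$ agrees with the second factor $\varphi_2$ of the $f_1$-local trivialization (\ref{def-varphi2}) around $p$: both send $x\mapsto\tilde\gamma_x(1)$ along the $f_1$-horizontal lift of the minimal geodesic from $f_1(x)$ to $p$. Since $\mathcal V_{f_2}(x)=T_xF_{2,p}$, differentiating along the fiber gives $A=\dif\Phi_1|_{\mathcal V_{f_2}(x)}=\dif\varphi_2|_{\mathcal V_{f_2}(x)}$. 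Because $A$ maps between spaces of equal dimension $n-m$, it is non-degenerate exactly when it is injective, i.e. when $\dif\varphi_2(v)\neq0$ for every nonzero $f_2$-vertical $v$; combined with the previous paragraph this proves the lemma. The only point requiring genuine care is the fiberwise identification $\Phi_1|_{F_{2,p}}=\varphi_2|_{F_{2,p}}$ and the resulting block-triangular bookkeeping; everything else is a direct consequence of the submersion relation $f_1\circ\Phi_1=\hat f_2$.
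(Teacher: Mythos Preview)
Your argument is correct and is essentially the same as the paper's, only with the linear algebra made more explicit. The paper compresses your block-triangular computation into the single observation $\ker\dif\Phi_1\subset\mathcal V_{f_2}(x)$ (which follows from $\dif f_1\circ\dif\Phi_1=\dif\hat f_2$) together with the fiberwise identity $\dif\Phi_1(v)=\dif\varphi_2(v)$ for $v\in\mathcal V_{f_2}(x)$; your explicit verification that the horizontal block $C$ is invertible is the dual way of saying the same thing.
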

\begin{proof}
	Let $0<r<\injrad_{h_1}(p)=1$, and let $$\varphi:f_1^{-1}(B_{r}(p))\to B_{r}(p)\times F_{1,p}, \quad \varphi=(f_1(x),\varphi_2(x))$$ be a local trivialization of $f_1$ centered at $F_{1,p}$ defined in (\ref{def-varphi}), where $\varphi_2$ is its 2nd factor as in (\ref{def-varphi2}).
	Then by definition, 
	\begin{equation}
	\Phi_1(x)=\tilde\gamma_x(1)=\varphi_2(x), \quad \text{for any $x\in F_{2,p}\cap f_1^{-1}(B_{r}(p))$}.
	\end{equation}
	It follows that  
	\begin{equation}\label{Phi-isom-1}
	\dif \Phi_1(v)=\dif \varphi_2(v), \quad\text{for any $f_2$-vertical vector $v\in \mathcal V_{f_2}(x)$}.
	\end{equation}
	Furthermore, by definition
	\begin{equation}
	\dif \hat f_2=\dif f_1\circ \dif\Phi_1,
	\end{equation}
	which implies that
	the kernel of $\dif\Phi_1$ is contained in the $f_2$-vertical distribution $\mathcal V_{f_2}$, 
	\begin{equation}\label{Phi-isom-2}
	\ker\dif \Phi_1\subset \mathcal V_{f_2}(x).
	\end{equation} By (\ref{Phi-isom-1}) and (\ref{Phi-isom-2}) we conclude Lemma \ref{lem-bundle-isomorphism}.
\end{proof}

\begin{proposition}\label{prop-bundle-diffeo}
	There is $\epsilon_0=\epsilon_0(L_0,\delta_0,n)>0$ such that if $0\le\epsilon\le \epsilon_0$, then
	$\Phi_1:(M,g_1)\to (M,g_1)$ is a diffeomorphism satisfying $f_1\circ\Phi_1=\hat f_2$, and the restriction of $\Phi_1$ on every fiber is $e^{\varkappa(\epsilon)}$-bi-Lipschitz.
\end{proposition}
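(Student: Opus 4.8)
The plan is to reduce the whole statement to the non-degeneracy criterion of Lemma \ref{lem-bundle-isomorphism}. Once I establish that $\dif\varphi_2(v)\neq 0$ for every $f_2$-vertical vector $v$, that lemma gives that $\dif\Phi_1$ is non-degenerate at every point, and then the homotopy $H$ of (\ref{def-homotopy}) together with properness of the bundle map exhibits $\Phi_1$ as a covering map homotopic to $\operatorname{Id}_M$, hence a diffeomorphism, exactly as already observed before Lemma \ref{lem-bundle-isomorphism}. So the entire content of the proposition is a uniform two-sided estimate on $\dif\varphi_2$ along the distribution $\mathcal V_{f_2}$, where $\varphi_2$ is the second factor of the local trivialization of $f_1$ centered at $F_{1,p}$.

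Fix $x\in M$, set $p=\hat f_2(x)$ and $p_x=f_1(x)$, and rescale so that $\injrad_{h_1}(p)=1$. The $C^0$-closeness (\ref{dist-error}), combined with (\ref{def-collapse-fiber}) and the Lipschitz property (\ref{inj-lip}) of the injectivity radius, yields $r(x)=d(p_x,p)\le\varkappa(\epsilon\,|\,L_0)$; in particular $x\in f_1^{-1}(B_r(p))$ for a small $r$, so $\varphi_2$ is defined at $x$ and its estimates apply. The key step is to decompose a unit $f_2$-vertical vector $v$ into its $f_1$-components, $v=v^{\mathcal V}+v^{\mathcal H}$ with $v^{\mathcal V}\in\mathcal V_{f_1}(x)$ and $v^{\mathcal H}\in\mathcal H_{f_1}(x)$. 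Since $\dim\mathcal V_{f_1}=\dim\mathcal V_{f_2}=n-m$, the small dihedral-angle bound of Proposition \ref{prop-distribution-close}(\ref{prop-vertical-close}) (recall $\mathcal V_{\hat f_2}=\mathcal V_{f_2}$, as $\Psi$ is a diffeomorphism) forces $|v^{\mathcal H}|\le\varkappa(\epsilon)$ and $|v^{\mathcal V}|\ge 1-\varkappa(\epsilon)$.

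Next I apply the two trivialization estimates of Section 1, using $|\sndf_{F_{1,q}}|\le e^\epsilon\delta_0$ and $|A_{F_{1,q}}|\le 2e^{3\epsilon}\delta_0$ from (\ref{eq-C2-bound}), together with $r(x)\le\varkappa(\epsilon)$ and the Lipschitz and co-Lipschitz constants of $f_1$ both being $e^\epsilon$. On the vertical part, the lower bound in (\ref{lem-second-fundamental-co-Lip}) has a $\varkappa(\epsilon)$-small exponent, so $|\dif\varphi_2(v^{\mathcal V})|\ge(1-\varkappa(\epsilon))|v^{\mathcal V}|\ge 1-\varkappa(\epsilon)$. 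On the horizontal part, (\ref{lem-integrability}) gives $|\dif\varphi_2(v^{\mathcal H})|\le\varkappa(\epsilon)$, since its right-hand side carries the extra factor $r(x)\le\varkappa(\epsilon)$ in addition to the already small $|v^{\mathcal H}|$. The triangle inequality then yields $1-\varkappa(\epsilon)\le|\dif\varphi_2(v)|\le 1+\varkappa(\epsilon)$, so in particular $\dif\varphi_2(v)\neq 0$, which gives non-degeneracy and hence the diffeomorphism claim. For the fiber estimate, (\ref{Phi-isom-1}) identifies $\dif(\Phi_1|_{F_{2,p}})$ with $\dif\varphi_2$ on $\mathcal V_{f_2}$, so the same two-sided bound shows $\Phi_1|_{F_{2,p}}$ is $e^{\varkappa(\epsilon)}$-bi-Lipschitz in the $g_1$-induced fiber metrics.

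The main obstacle is precisely that the estimates (\ref{lem-second-fundamental-co-Lip}) and (\ref{lem-integrability}) are adapted to the $f_1$-splitting, whereas the vector I must control is $f_2$-vertical; the argument only closes by routing through the weak $C^1$-closeness of Proposition \ref{prop-distribution-close} to bound the horizontal leakage $v^{\mathcal H}$. What makes this safe is a \emph{double} smallness: $v^{\mathcal H}$ is small by the angle estimate, and the operator norm of $\dif\varphi_2$ on $f_1$-horizontal vectors is itself $\varkappa(\epsilon)$-small because (\ref{lem-integrability}) vanishes to first order in $r(x)$, with $r(x)\to 0$ furnished by the $C^0$-closeness (\ref{dist-error}). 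The two points requiring care are that the integrability-tensor estimate survives the rescaling to $\injrad_{h_1}(p)=1$, and that the dihedral-angle bound legitimately controls the principal angles between the two vertical distributions when they have equal dimension; both are routine but are where the verification should be done carefully.
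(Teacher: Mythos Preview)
Your argument is correct and follows essentially the same route as the paper: reduce via Lemma \ref{lem-bundle-isomorphism} to a two-sided estimate on $\dif\varphi_2$ along $\mathcal V_{f_2}$, decompose a unit $f_2$-vertical vector into its $f_1$-vertical and $f_1$-horizontal parts, control the latter by the dihedral-angle bound of Proposition \ref{prop-distribution-close}, and then apply (\ref{lem-second-fundamental-co-Lip})--(\ref{lem-integrability}) together with $d(p_x,p)\le\varkappa(\epsilon)$. Your explicit invocation of (\ref{Phi-isom-1}) for the fiberwise bi-Lipschitz claim and your remark on the ``double smallness'' are a bit more explicit than the paper's write-up, but the substance is identical.
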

\begin{proof}
	Continue from the above discussion. Let $\varphi=(f_1,\varphi_2)$ be the local trivialization of $f_1$ be defined as (\ref{def-varphi2}). By Lemma \ref{lem-bundle-isomorphism}, it suffices to show that for any $g_1$-unit vector $v\in \mathcal V_{f_2}(x)$, $\dif \Phi_1(v)=\dif \varphi_2(v)\neq 0$. 
	
	Let $v=v^\bot + v^\top$ be its orthogonal decomposition such that $v^\bot\in \mathcal H_{f_1}(x)$ and $v^\top\in \mathcal V_{f_1}(x)$.
	By (\ref{prop-vertical-close}) in Proposition \ref{prop-distribution-close}, 
	\begin{equation}\label{ineq-tech-weaker-1}
	|v^\bot|\le \varkappa(\epsilon),\quad |v^\top|\ge \sqrt{1-\varkappa^2(\epsilon)},
	\end{equation}
	where $\varkappa(\epsilon)=\varkappa(\epsilon|L_0,\delta_0,  m)$ and the norm hereafter is measured in $g_1$.
	
	Since $|\sec(Y_1,h_1)|\le 1$, $\dif \varphi_2$ can be explicitly estimated by variation of horizontal curves. Let $\gamma:[0,1]\to (Y_1,h_1)$ be the minimal geodesic from 
	$p_x=f_1(x)$ to $p=\hat f_2(x)$. By (\ref{lem-second-fundamental-co-Lip})-(\ref{lem-integrability}) ,
	\begin{equation}\label{ineq-tech-weaker-2}
	\begin{aligned}
	&|\dif \varphi_2(v^\top)|\ge e^{-e^\epsilon\cdot |\sndf_{F_{1,\gamma}}|\cdot d(p_x,p)} \cdot |v^\top|,\\
	&|\dif \varphi_2(v^\perp)|\le
	Ce^{3\epsilon+e^\epsilon | \sndf_{1,\gamma}|\cdot d(p_x,p)}\cdot |A_{1,\gamma}|\cdot d(p_x,p)\cdot |v^\perp|,
	\end{aligned}
	\end{equation}
	where $|\sndf_{1,\gamma}|=\max_t |\sndf_{f_1^{-1}(\gamma(t))}|$ and $|A_{1,\gamma}|=\max_t |A_{f_1^{-1}(\gamma(t))}|$, which by (\ref{eq-C2-bound}), is bounded by $2e^{3\epsilon}\delta_0$.
	
	Since $d(p_x,p)\le \frac{2L_0^2\epsilon}{1-2L_0^2\epsilon}$, combing (\ref{ineq-tech-weaker-1})-(\ref{ineq-tech-weaker-2}) we derive
	\begin{align}
	|\dif \Phi_1(v)|
	&\ge |\dif \varphi_2(v^\top)|-
	|\dif \varphi_2 (v^\bot)|\notag\\
	&\ge e^{-\varkappa(\epsilon)}\sqrt{1-\varkappa(\epsilon)}-\varkappa(\epsilon)\label{ineq-fiber-proportional}.
	\end{align}
	Clearly, $|\dif\Phi_1|$ admits a similar upper bound.
\end{proof}

We make several remarks on Proposition \ref{prop-bundle-diffeo} in order.
\begin{remark}\label{rem-diameter-proportional}
	For any point $x\in M$, by Proposition \ref{prop-bi-Lip-transformation}, $\injrad_{h_1}(f_1(x))$ is also uniformly proportional to $\injrad_{h_1}(f_2(x))$.
	By Proposition \ref{prop-bundle-diffeo},  $\diam_{g_1}F_{1,f_1(x)}$ and $\diam_{g_2}F_{2,f_2(x)}$ are $e^{\varkappa(\epsilon)}L_0$-proportional to each other. 
\end{remark}
	
\begin{remark}\label{rem-isotopy}
	The method in this section can be applied as a replacement of \cite[Proposition A.2.2]{CFG1992} in the construction of a global nilpotent Killing structure in \cite{CFG1992}, where an isotopy from $\operatorname{Id}_M$ to $\Phi_1$ is required.
	
	Indeed, in the above case $f_1$ and $\hat f_2$ are $C^1$-close (see Remark \ref{rem-C1-close}). Hence, for any $t\in[0,1]$, the map $H_t:M\to Y_1$, $H_t(x)=\gamma_x(t)$ is also an almost Riemannian submersion, where $\gamma_x:[0,1]\to Y_1$ is the unique minimal geodesic from $f_1(x)$ to $\hat f_2(x)$. By the fact again that everything involved is $C^1$-close, it is easy to see that the estimate (\ref{ineq-tech-weaker-1})-(\ref{ineq-tech-weaker-2}) also works for $\Phi_t(x)=\tilde \gamma_x(t)$. Hence the homotopy $H:[0,1]\times M\to M$, $H(t,x)=\Phi_t(x)$ is an isotopy from $\operatorname{Id}_M$ to $\Phi_1$.
	
	Therefore, Proposition \ref{prop-bundle-diffeo} generalizes Proposition A2.2 in \cite{CFG1992}. One benefit of our approach is, the normal injectivity radius of fibers are not required to admit a uniform lower bound.
	 
\end{remark}

\begin{remark}\label{rem-local-diffeo}
Behind the proof of Proposition \ref{prop-bi-Lip-transformation} is the fact that $\psi_{p,x}$ defined in (\ref{local-diffeo}) is a diffeomorphism onto its image (though it is not explicitly used). This fact can be verified as follows.
	
Let $\varphi=(f_1,\varphi_2)$ be the local trivialization of $f_1$ given by (\ref{def-varphi}).
By the same argument in proving Proposition \ref{prop-bundle-diffeo}, $\mathcal V_{f_{2}}$ is transversal to the kernel of $\dif \varphi_2$ in $f_1^{-1}(B_r(p))$ for any $0<r<\min\{1,\injrad_{h_1}(p)\}$ and sufficient small $\epsilon$. By Lemma \ref{lem-bundle-isomorphism}, the map $\psi_{p,x}$ is a local diffeomorphism.

We claim that any $f_2$-fiber in $f_1^{-1}(B_r(p))$ intersects with $S_x=\varphi_2^{-1}(x)$ at most once. Hence $\psi_{p,x}$ is a diffeomorphism onto its image.

To verify the claim, let us argue by contradiction. If some $f_2$-fiber $F_2$ lying in $f_1^{-1}(B_r(p))$ intersects with  $S_x$ at two points $z_1$ and $z_2$, then there are two curves $\alpha:[0,1]\to S_x$ and $\beta:[0,1]\to F_2$, both of which are connecting $z_1$ and $z_2$, such that they are homotopic to each other with fixed endpoints. After passing to the tangent space of $T_xM$, the lifts of $S_x$ can be viewed as a coordinate plane $\tilde S_x$. It follows that the lifting $\tilde \beta$ has two endpoints in $\tilde S_x$, and thus there is some $t_0\in (0,1)$ such that $\tilde \beta'(t_0)$ is tangent to $\tilde S_x$, which contradicts to the fact that $\mathcal V_{f_2}$ is transversal to $S_x$.
\end{remark}

\section{Affine Bundle Isomorphism and Proofs of Main Theorems}
We first prove Theorem \ref{main-techthm}.

Let $(M,Y_i,f_i)$ be two affine bundles equipped with metrics $g_i, h_i$ on $M$ and $Y_i$ respectively, which satisfy the conditions in Theorem \ref{main-techthm}. Then $f_i:(M,g_i)\to (Y_i,h_i)$ are Riemannian submersions and $h_i$ is the quotient of $g_i$ by the canonical nilpotent Killing structure of $f_i$ on $(M,g_i)$. 

Because $f_i$ is a Riemannian submersion, $|\nabla^2f_i|$ is bounded by $\max\{|\sndf_{f_i}|,|A_{f_i}|\}$, where $|\sndf_{f_i}|$ is bounded by (\ref{eps-collpase}.3), and by O'Neil's formula \cite{ONeill1966} and (\ref{bounded-curv}), $|A|^2\le \frac{8}{3}$. Proposition \ref{prop-distribution-close} holds for $f_i$.

By Proposition \ref{prop-bi-Lip-transformation} and Proposition \ref{prop-bundle-diffeo}, there are diffeomorphic bundle isomorphism $(\Phi_1,\Psi^{-1})$ between $(M,Y_2,f_2)$ and $(M,Y_1,f_1)$ i.e., $\Psi^{-1}\circ f_2=f_1\circ \Phi_1$. However, $\dif\Phi_1$ generally does not preserve the affine connection between $f_i$-fibers.

In order to improve $\Phi_1$ to an affine bundle isomorphism, we first prove that the group actions induced by the affine structures are $C^1$-close.

Let $\mathfrak n_1$ be the canonical nilpotent Killing structure for affine bundle $f_1$, and $\mathfrak n_2$ be the push forward of that for $f_2$ by $\Phi_1$ on $M$. Let us fix a point $p\in Y_1$, and let $r=\injrad_{h_1}(p)/2$. Then $U=f_{1}^{-1}(B_r(p;h_1))$ is $\mathfrak n_i$-invariant. Let $\tilde U\overset{\pi}{\to}U$ be the universal cover, $\tilde f_1=f_1\circ \pi$ and $\tilde f_2=f_2\circ \Phi_1^{-1}\circ \pi$. Then the pullback $\tilde n_i$ on $\tilde U$ generates two free actions $\rho_i$ of simply connected nilpotent Lie groups $N_i$, which are left translations on $\tilde f_i$-fibers. Let $\Lambda$ be the fundamental group of $f_1^{-1}(p)=(\Psi^{-1}\circ f_2\circ\Phi_1^{-1})^{-1}(p)$. By Malcev's rigidity theorem (see \cite{Rag79}), $N_1$ and $N_2$ can be identified to a same group $N$ by the natural isomorphism between their lattice $\Lambda\cap N_i$. Moreover, the two actions $\rho_1$ and $\rho_2$ of $N$ coincide on $\Lambda$.
\begin{lemma}\label{lem-action-close}
	The two actions $\rho_i$ $(i=1,2)$ generated by the pullback $\tilde {\mathfrak n_i}$ on $\tilde U$ are $\varkappa(\epsilon|L_0,\delta_0,n)$-$C^1$-close.
\end{lemma}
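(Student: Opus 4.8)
The plan is to show that the two nilpotent group actions $\rho_1,\rho_2$ on the universal cover $\tilde U$ are $C^1$-close by reducing the statement to a $C^1$-closeness of the generating infinitesimal data and then integrating. Since $\rho_i$ are free left translations along the $\tilde f_i$-fibers, it suffices to compare them on orbits. The starting observation is that $\rho_1$ and $\rho_2$ coincide on the lattice $\Lambda\cap N_i$ (identified via Malcev rigidity), and that the metrics $g_1$ and $\hat g_2=\Phi_1^*g_2$ are $L_0$-equivalent with $\Phi_1$ fiberwise $e^{\varkappa(\epsilon)}$-bi-Lipschitz (Proposition~\ref{prop-bundle-diffeo}). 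So both actions preserve metrics that are uniformly close, and their orbits through each point are the (identical as sets, up to $\varkappa(\epsilon)$) $\tilde f_i$-fibers whose vertical distributions are $\varkappa(\epsilon)$-close by Proposition~\ref{prop-distribution-close}(\ref{prop-vertical-close}).

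First I would set up the comparison at the Lie-algebra level. Each $\rho_i$ is generated by the stalk $\tilde{\mathfrak n}_i$, a sheaf of right-invariant (Killing) vector fields on the $\tilde f_i$-fibers; evaluating a basis of $N$ against the lattice $\Lambda$ pins down both actions on a fundamental domain. Because $\rho_1$ and $\rho_2$ agree on $\Lambda$ and the fibers have diameter $\le\varkappa(\epsilon)\cdot\injrad_{h_1}(p)$ after rescaling to $\injrad_{h_1}(p)=1$, a Malcev-coordinate expansion shows that the two exponential parametrizations $\exp_{\rho_i}:N\to \tilde U$ differ only through the second fundamental forms and the integrability tensors of $f_i$, which are bounded by $\delta_0$ and O'Neill's bound respectively. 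The vertical distributions being $\varkappa(\epsilon)$-close (Proposition~\ref{prop-distribution-close}.\ref{prop-vertical-close}) forces the generating Killing fields of $\tilde{\mathfrak n}_1$ and $\tilde{\mathfrak n}_2$ to be $\varkappa(\epsilon)$-$C^0$-close as vector fields on $\tilde U$, after the identification $N_1\cong N\cong N_2$.

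Next I would upgrade $C^0$-closeness of the generating fields to $C^1$-closeness of the actions. The key point is that these are \emph{Killing} fields for metrics that are themselves $C^{1,\alpha}$-controlled (uniformly bounded curvature and covariant derivatives by Theorem~\ref{thm-nilpotent-structure}); hence a Killing field is determined by its $1$-jet at a single point, and elliptic regularity for the Killing equation promotes $C^0$-bounds on the fields to $C^1$-bounds on a slightly smaller ball. Concretely, on $\tilde U$ one has $\nabla^2 X = -R(\,\cdot\,,X)\,\cdot\,$ for a Killing field $X$, so a $\varkappa(\epsilon)$-$C^0$ difference $X_1-X_2$ together with the curvature bound yields a $\varkappa(\epsilon)$-$C^1$ difference. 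Integrating these $C^1$-close fields over the uniform time needed to sweep out an orbit (bounded because $\#\Lambda/(\Lambda\cap N_0)\le k$ and orbits have controlled diameter) then produces the desired $\varkappa(\epsilon)$-$C^1$-closeness of $\rho_1$ and $\rho_2$.

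I expect the main obstacle to be the passage from the $C^0$-closeness of vertical distributions (which is all Proposition~\ref{prop-distribution-close} directly gives) to genuine $C^1$-closeness of the \emph{actions}, rather than merely of the orbit foliations. The subtlety is that two free nilpotent actions with the same orbits and the same restriction to a lattice can still differ by a nilmanifold automorphism; ruling this out requires using that both fix $\Lambda$ pointwise \emph{and} that the induced left-invariant metrics on the fibers are $e^{\varkappa(\epsilon)}$-close, which rigidifies the Malcev parametrization. Making the elliptic-regularity step quantitative with constants depending only on $L_0,\delta_0,n$—so that everything folds into a single $\varkappa(\epsilon|L_0,\delta_0,n)$—is where the technical care concentrates.
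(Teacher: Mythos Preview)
Your approach differs substantially from the paper's. The paper argues by contradiction via blow-up and equivariant $C^{1,\alpha}$-convergence: rescale $(\tilde U,\tilde g_i)$ by $\hat\varepsilon^{-1}$ with $\hat\varepsilon=\min\{1,\injrad_{h_1}(p)\}$, pass to a convergent subsequence $(\tilde U,\hat\varepsilon^{-1}\tilde g_i,x,\rho_i(N))\to(\tilde U,\tilde g_{i,\infty},x,\rho_{i,\infty}(N))$, and observe that since the fiber diameters tend to zero the lattice $\Lambda$ becomes \emph{dense} in each limit orbit, so $\rho_{i,\infty}(N)$ is simply the closure of the $\Lambda$-action. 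Because $\rho_1|_\Lambda=\rho_2|_\Lambda$, the two limit actions coincide, and $C^{1,\alpha}$-convergence then yields the $\varkappa(\epsilon)$-$C^1$-closeness (which is scale-invariant). No direct Killing-field analysis is needed; the density of $\Lambda$ in the limit is exactly what resolves the parametrization ambiguity you correctly flag in your last paragraph.

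Your direct route---compare the generating Killing fields, then integrate---is plausible but two steps need sharpening. First, the passage from ``vertical distributions are $\varkappa(\epsilon)$-close and the actions agree on $\Lambda$'' to ``the generating Killing fields are $C^0$-close on $\tilde U$'' is precisely the density-of-lattice argument in disguise: without $\epsilon$ small enough that $\Lambda$ is $\varkappa(\epsilon)$-dense in the rescaled fibers, agreement on $\Lambda$ does not pin down the $N$-parametrization, and your ``Malcev-coordinate expansion'' has to make this quantitative. Second, your elliptic step should be phrased as interpolation rather than as the Killing equation applied to the difference: $X_1$ and $X_2$ are Killing for \emph{different} metrics that are only $L_0$-equivalent, not $C^1$-close, so $X_1-X_2$ satisfies no single Killing equation. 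What does work is that each $X_i$ is $C^2$-bounded via its own equation $\nabla_i^2 X_i=-R_i(\cdot,X_i)\cdot$, and then an interpolation inequality of the form $\|X_1-X_2\|_{C^1}\lesssim\|X_1-X_2\|_{C^0}^{1/2}\,\|X_1-X_2\|_{C^2}^{1/2}$ closes the loop. The paper's compactness argument sidesteps both technicalities at once, at the cost of giving no explicit rate for $\varkappa$.
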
 
\begin{proof}
	Let $\tilde g_1$ (resp. $\tilde g_2$) be the pullback metric of $g_1$ (resp.  $(\Phi_1^{-1})^*g_2$) on $\tilde U$. The action $\rho_i$ is isometric with respect to $\tilde g_i$. Since $\sndf_{\tilde f_i}$ with respect to $\tilde g_i$ is under control, by \cite[Proposition 4.6.3]{BuserKarcher81} (or \cite[Lemma 7.13]{CFG1992}), $$\injrad_{\pi^*h_i}(\tilde x)\ge \min\{r/2,i_0(\delta_0,n)\}>0,$$ for any point $\tilde x$ with $d(\tilde x,\partial \tilde U)>r/2$. 
	
	Let $\hat\varepsilon=\min\{1,\injrad_{h_1}(p)\}$.
	Let us rescale $\tilde g_i$ by $\hat\varepsilon^{-1}$ and let $\epsilon\to 0$. By passing to a subsequence, we can assume that
	the equivariant convergence $$(\tilde U, \hat\varepsilon^{-1}\tilde g_i, x, \rho_i(N))\overset{C^{1,\alpha}}{\longrightarrow} (\tilde U, \tilde g_{i,\infty}, x, \rho_{i,\infty}(N)).$$ 
	Because the diameter of both $f_1$-fibers and $f_2$-fibers goes to $0$, the action of $\Lambda$ becomes more and more dense such that $\rho_{i,\infty}(N)$ is also the limit action of $\Lambda$. Thus the two limit actions coincide. This implies that the actions of $\tilde{\mathfrak n_i}$ are $C^1$-close, which are invariant under rescaling by $\hat\varepsilon$.
\end{proof}
By the $C^1$-closeness of $\rho_i$, it follows from the argument in \cite[section 7]{CFG1992} that $\Phi_1$ can be modified to an affine bundle isomorphism. In the following we give a proof for completeness.
\begin{proposition}\label{prop-affine-isom}
	There is an $G$-equivariant diffeomorphism $\Phi_2:M\to M$ such that $(\Phi_2)_*\mathfrak n_2=\mathfrak n_1$.
\end{proposition}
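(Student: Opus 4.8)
The plan is to build $\Phi_2$ by correcting the bundle map through an averaging (center of mass) over the infra-nil fibers, precisely so that the two nilpotent actions become conjugate on the local cover. First I would work on the universal cover $\tilde U\overset{\pi}{\to} U$ of an $\mathfrak n_i$-invariant neighborhood $U=f_1^{-1}(B_r(p))$, where by the setup preceding Lemma \ref{lem-action-close} the pullbacks $\tilde{\mathfrak n}_i$ generate two free actions $\rho_1,\rho_2$ of the same simply connected nilpotent group $N$ (identified through their common lattice $\Lambda$ by Malcev rigidity), with $\rho_1$ and $\rho_2$ agreeing on $\Lambda$ and, by Lemma \ref{lem-action-close}, $\varkappa(\epsilon|L_0,\delta_0,n)$-$C^1$-close after rescaling by $\hat\varepsilon$. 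Since the fibers are collapsed, a fundamental domain of $\Lambda$ in $N$ has controlled size, so $\rho_1(n)\rho_2(n)^{-1}$ displaces points by $\varkappa(\epsilon)$ uniformly for $n$ ranging over that domain.

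I would then set, for $\tilde x\in\tilde U$,
\[
\tilde\Phi_2(\tilde x)=\operatorname{cm}\bigl\{\,\rho_1(n)\,\rho_2(n)^{-1}\tilde x \;:\; n\in N/\Lambda\,\bigr\},
\]
the center of mass with respect to the $N$-invariant probability measure on the compact quotient $N/\Lambda$, measured in $\tilde g_1$. The integrand descends to $N/\Lambda$ because $\rho_1(\lambda)=\rho_2(\lambda)$ for $\lambda\in\Lambda$ makes it invariant under $n\mapsto n\lambda$; and by the previous paragraph its image lies in a convex ball of radius below the threshold $\tfrac{\pi}{6}$ required in Section \ref{sec-transformation}, so the center of mass exists and is unique. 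The core computation is equivariance: substituting $n=mn'$ and using left-invariance of the Haar measure (nilpotent groups are unimodular) together with the fact that $\operatorname{cm}$ commutes with the $\tilde g_1$-isometry $\rho_1(m)$, one obtains $\tilde\Phi_2\circ\rho_2(m)=\rho_1(m)\circ\tilde\Phi_2$ for all $m\in N$. In particular $\tilde\Phi_2$ commutes with the deck group $\Lambda$, hence descends to $\Phi_2$ on $U$; and since it conjugates $\rho_2$ to $\rho_1$, its differential carries the generators of $\mathfrak n_2$ to those of $\mathfrak n_1$, yielding $(\Phi_2)_*\mathfrak n_2=\mathfrak n_1$.

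It then remains to see that $\Phi_2$ is a diffeomorphism and that the local constructions glue globally. For the first point, the $C^1$-closeness of $\rho_1,\rho_2$ forces $\tilde\Phi_2$ to be $\varkappa(\epsilon)$-$C^1$-close to $\operatorname{Id}$, so for $\epsilon\le\epsilon_0(L_0,\delta_0,n)$ the differential $d\Phi_2$ is nondegenerate and $\Phi_2$ is a diffeomorphism, exactly as in the proof of Proposition \ref{prop-bundle-diffeo}. For the second, since the center of mass is canonically determined by the global sheaves $\mathfrak n_1,\mathfrak n_2$ and the metric $g_1$, and is independent of the chosen basepoint and cover, the locally defined maps agree on overlaps and patch to a global $\Phi_2:M\to M$. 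Finally $G$-equivariance is automatic: $G$ acts by isometries preserving both affine structures, hence preserving $\rho_1,\rho_2$ and the invariant measure on $N/\Lambda$, and since $\operatorname{cm}$ is natural under isometries, $\Phi_2$ commutes with $G$.

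The hard part will be the uniform well-definedness and nondegeneracy of the average: one must ensure quantitatively that over the whole rescaled fiber the displacement $\rho_1(n)\rho_2(n)^{-1}$ remains inside a convex ball small enough for the center of mass, and that the averaged differential stays close to the identity — both resting on Lemma \ref{lem-action-close} and the smallness of the fibers in $(\ref{eps-collpase}.3)$. The second delicate point is the global patching, i.e.\ confirming that these canonical local constructions are genuinely compatible across different local covers and lattices, as carried out in \cite[section 7]{CFG1992}.
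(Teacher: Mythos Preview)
Your proposal is correct and follows essentially the same route as the paper: define $\tilde\Phi_2$ on the local cover as the center of mass of $n\mapsto \rho_1(n)\rho_2(n)^{-1}\tilde x$ over the compact nil-quotient, invoke Lemma~\ref{lem-action-close} (and \cite{GroveKarcher1973}) for well-definedness and nondegeneracy, verify $N$-equivariance by the substitution $n\mapsto mn$, descend through $\Lambda$, and observe that the construction is canonical hence patches globally and is $G$-equivariant. The only cosmetic difference is that the paper averages $h\mapsto \rho_1(h^{-1})\rho_2(h)$ over $\Lambda\backslash N$ rather than your $\rho_1(n)\rho_2(n)^{-1}$ over $N/\Lambda$; both versions yield the same equivariance identity once unimodularity of $N$ is used.
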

\begin{proof}
	We will follow the argument in \cite[section 7]{CFG1992} to derive $\Phi_2$. Let $\tilde U_1=\tilde f_1^{-1}(B_{r/2}(p))\subset \tilde U$.	
	
	We first prove that there is a $N$-equivariant diffeomorphism $\tilde \Phi_2:\tilde U_1\to \tilde U_1$.
	Continue from Lemma \ref{lem-action-close}. Let $\rho_i$ be the action of $N$ on $\tilde U$. For any $\lambda \in \Lambda$, we have $\rho_1(\lambda)=\rho_2(\lambda)$, and for any $[h]\in \Lambda\backslash N$, $\rho_1([h]^{-1})\circ \rho_2([h])$ is well defined and $\varkappa(\epsilon)$ $C^1$-close to the identity.
	For any $x\in \tilde U_1$, let $\tilde\Phi_2(x)$ be the center of mass of $h\mapsto \rho_1([h]^{-1})\circ\rho_2([h])$, i.e., the critical value of 
	$$y\to \int_{\Lambda\backslash N} d(y, \rho_1(h^{-1})\circ \rho_2(h)x)dh$$ 
	By \cite{GroveKarcher1973}, $\tilde \Phi_2:\tilde U_1\to \tilde U_1$ is well-defined diffeomorphism such that
	$$\tilde \Phi_2\circ\rho_2(h)(x)=\rho_1(h)\tilde \Phi_2(x),\quad \text{for any $h\in N$.}$$
	
	Secondly, by the construction, the quotient $\Phi_2:f_1^{-1}(B_{r/2}(p))\to f_1^{-1}(B_{r/2}(p))$ is $G$-equivariant and affine-equivariant.
	Moreover, for any $x\in B_{r/2}(p)$, the definition of $\Phi_2(x)$ does not depends on the choice of $p$.
	Thus $\Phi_2$ can be extended to a globally defined $G$-equivariant diffeomorphism such that 
	 $(\Phi_2)_*\mathfrak n_2=\mathfrak n_1$.
	 
\end{proof}
\begin{proof}[Proof of Theorem \ref{main-techthm}]
	\item \indent \indent 
	Let $\Psi:Y_1\to Y_2$, $\Phi_1:M\to M$ and $\Phi_2:M\to M$ to be diffeomorphism given by Proposition \ref{prop-bi-Lip-transformation}, Proposition \ref{prop-bundle-diffeo} and Proposition \ref{prop-affine-isom} respectively. 
	Let $\Phi=(\Phi_2\circ \Phi_1)^{-1}$, then $\Psi\circ f_1=f_2\circ \Phi$. 
	 Because $\Phi$ preserves the nilpotent Killing structures of $f_i$, it is an affine bundle isomorphism. By construction, $\Phi$ is $G$-equivariant, if in addition, $f_i$ are $G$-equivariant.
	 
\end{proof}

\begin{proof}[Proof of Theorem \ref{thm-lip-stable}]
	\item \indent\indent  
	Let $\mathfrak n_i$ be a nilpotent Killing structure on $M$ associated to $g_i$ ($i=1,2$). By Theorem \ref{thm-nilpotent-structure}, without loss of generality we assume that $g_i$ is $\mathfrak n_i$-invariant and $|\nabla^j R|\le A_j(n)$ ($j=0,1,\dots$).  By O'Neill formula, the orthonormal frame bundle $(FM,\tilde g_i)$, with a canonical metric induced by $g_i$, still admit a uniform two-sided sectional curvature bound.
	
	Since the differential action of $\mathfrak n_i$ on $FM$ is free and $\mathfrak n_i$ is pure, the quotient of $FM$ by the lifting nilpotent Killing structure 
	$\tilde{\mathfrak n}_i$ is still a Riemannian manifold $Y_i$. Moreover, since $\mathfrak n_i$ is pure and points all collapsing directions, the injectivity radius of $Y_i$ admits a uniform lower bound $i_0(n)$ (see \cite{Fukaya1988}, \cite{CFG1992}). 
	
	Thus, $\tilde{\mathfrak n}_i$ corresponds to an $O(n)$-equivariant affine bundle $\tilde f_i:(FM,\tilde g_i)\to Y_i$ satisfying (\ref{bounded-curv}), (\ref{Lip-equiv}.2), and (\ref{eps-collpase}.3).
	 
	If the dimension of $Y_i$ is the same, then by Theorem \ref{main-techthm}, $Y_1$ and $Y_2$ are diffeomorphic and $(FM,Y_i,\tilde f_i)$ are isomorphic as affine bundles. Consequently, the affine bundle isomorphism $\tilde \Phi:FM\to FM$ descends to a diffeomorphism $\Phi:M\to M$, such that $\Phi_*\mathfrak n_1=\mathfrak n_2$, and their infinitesimal action are conjugate by $\Phi$.
	
	Now let us assume that $n_1=\dim Y_1<\dim Y_2=n_2$. Since $g_1$ and $g_2$ are $L_0$-Lipschitz equivalent, the nilpotent Killing structure for $\tilde g_2$ can be chosen to be of the same dimension, i.e., there is another affine bundle $\tilde f'_2:(FM,\tilde g_2)\to (Y')^{n_1}$.
	
	By local compatibility of nilpotent structures (see \cite[Section 7]{CFG1992}), there is an affine bundle $\varphi:Y^{n_2}\to (Y')^{n_1}$ such that $\varphi \circ \tilde f_2$ is isomorphic to $\tilde f'_2$ as affine bundles. 
	Now by Theorem \ref{main-techthm}, $\tilde f'_2$ is conjugate to $\tilde f_1$, and thus after descending to $M$, we derive a diffeomorphism $\Phi:M\to M$ such that $\Phi_*\mathfrak{n}_2\subset \mathfrak{n}_1$ as a subsheaf. 
\end{proof}

\begin{proof}[Proof of Theorem \ref{thm-unique-nstr}]
	\item \indent \indent 
	It directly follows from Theorem \ref{thm-lip-stable}.
\end{proof}


\bibliographystyle{plain}
\bibliography{document}{}
	
\end{document}